\title{On the length of non-solutions 
to equations with constants 
in some linear groups}
\author{Henry Bradford, Jakob Schneider 
and Andreas Thom}
\newtheorem{thm}{Theorem}[section]
\newtheorem{lem}[thm]{Lemma}
\newtheorem{propn}[thm]{Proposition}
\newtheorem{coroll}[thm]{Corollary}
\newtheorem{defn}[thm]{Definition}
\newtheorem{ex}[thm]{Example}
\newtheorem{rmrk}[thm]{Remark}
\newtheorem{qu}[thm]{Question}
\DeclareMathOperator{\Ad}{Ad}
\DeclareMathOperator{\diam}{diam}
\DeclareMathOperator{\Homeo}{Homeo}
\DeclareMathOperator{\Orb}{Orb}
\DeclareMathOperator{\PGL}{PGL}
\DeclareMathOperator{\PSL}{PSL}
\DeclareMathOperator{\SL}{SL}
\DeclareMathOperator{\Sym}{Sym}
\DeclareMathOperator{\tr}{tr}
\begin{document}

\maketitle

\begin{abstract}
We show that for any finite-rank free group $\Gamma$, 
any word-equation in one variable of length $n$ 
with constants in $\Gamma$ 
fails to be satisfied by some element of $\Gamma$ 
of word-length $O(\log (n))$. 
By a result of the first author, 
this logarithmic bound cannot be improved upon 
for any finitely generated group $\Gamma$. 
Beyond free groups, our method 
(and the logarithmic bound) 
applies to a class of groups 
including $\PSL_d(\mathbb{Z})$ for all $d \geq 2$, 
and the fundamental groups of all 
closed hyperbolic surfaces and $3$-manifolds. 
Finally, using a construction of Nekrashevych, 
we exhibit a finitely generated group $\Gamma$ and 
word-equations with constants in $\Gamma$ for which 
every non-solution in $\Gamma$ is of word-length 
strictly greater than logarithmic. 

\end{abstract}

\section{Introduction}

For $\Gamma$ any group and any $g \in \Gamma$, 
there is a unique homomorphism 
$\pi_g : \Gamma \ast \mathbb{Z} \rightarrow \Gamma$ 
restricting to the identity on $\Gamma$ 
and sending a specified generator $x$ of $\mathbb{Z}$ 
to $g$. 
We call $w$ a \emph{mixed identity} 
(or an \emph{identity with constants}) for $\Gamma$ 
if $w$ is nontrivial but $\pi_g (w)=e_{\Gamma}$ 
for all $g \in \Gamma$. 
Equivalently, we may define a \emph{word-map} 
$w : \Gamma \rightarrow \Gamma$ 
by $w(g) = \pi_g (w)$; 
$w$ is then a mixed identity for $\Gamma$ 
iff $w^{-1} (e_{\Gamma}) = \Gamma$. 

We describe $\Gamma$ as \emph{MIF} 
(mixed identity-free) if there are no mixed 
identities for $\Gamma$ in $\Gamma \ast \mathbb{Z}$. 
Nonabelian free groups are MIF 
(a fact due to Baumslag \cite{Baum}), 
as more generally are all torsion-free 
nonelementary Gromov hyperbolic groups 
(see \cite{AmRe} and the references therein). 
Being MIF imposes significant structural 
restrictions on $\Gamma$: 
an MIF group has no nontrivial 
finite conjugacy classes, 
and admits no nontrivial decomposition 
as a direct product. 
Moreover, MIF is inherited by nontrivial normal 
subgroups. 

Given a finite generating set $S$ for $\Gamma$, 
if $\Gamma$ is MIF then for any 
$w \in \Gamma \ast \mathbb{Z}$ 
there exists $g \in \Gamma$ 
of minimal word-length $\lvert g \rvert_S$ 
satisfying $w(g)\neq e$. 
Intuitively, 
the greater this minimal word-length should be, 
the harder it is to verify that 
$w$ is not a mixed identity for $\Gamma$. 
The goal of this note is to study 
how $\lvert g \rvert_S$ can depend 
on the length of $w$. 
Following \cite{Brad}, 
we formalize this as follows. 
The \emph{complexity} of $w \in\Gamma\ast\mathbb{Z}$ 
(with respect to $S$) is given by: 
\begin{equation*}
\chi_{\Gamma} ^S (w) = \min \lbrace \lvert g \rvert_S 
: g \in \Gamma, w(g)\neq e_{\Gamma} \rbrace
\end{equation*}
(with the convention that 
$\chi_{\Gamma} ^S (w) = +\infty$ 
if $w$ is a mixed identity for $\Gamma$). 
Here $\lvert g \rvert_S$ denotes the 
\emph{word-length} of $g$ with respect to the 
generating set $S$. 
The \emph{MIF growth function} 
$\mathcal{M}_{\Gamma} ^S 
: \mathbb{N} \rightarrow 
\mathbb{N} \cup \lbrace +\infty \rbrace$ 
of $\Gamma$ (with respect to $S$) 
is then given by: 
\begin{equation*}
\mathcal{M}_{\Gamma} ^S (n) = 
\max \big\lbrace \chi_{\Gamma} ^S (w) 
: w \in \Gamma \ast \mathbb{Z} , \lvert w \rvert_{S\cup\lbrace x \rbrace} \leq n \big\rbrace. 
\end{equation*}
Intuitively, a group of slow MIF growth is 
``strongly MIF'': it is easy to verify that 
a given $w$ is not a mixed identity for $\Gamma$, 
because there exists a witness of short word-length. 
As the dependence of $\mathcal{M}_{\Gamma} ^S$ 
on $S$ is slight, 
we suppress it from our 
notation for the remainder of the Introduction 
(with the understanding that all implied 
constants may depend on $S$). 

Since MIF places such powerful structural 
restrictions on a group, 
it is no surprise that groups of slow MIF growth 
are difficult to identify, and indeed, 
there are no groups of bounded MIF growth. 

\begin{thm}[\cite{Brad} Theorem 1.9] \label{GeneralLowerBoundThm}
For any finitely generated group $\Gamma$, 
$\mathcal{M}_{\Gamma}(n) \gg \log (n)$. 
\end{thm}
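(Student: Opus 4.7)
The plan is to construct explicitly, for each $k \geq 1$, a nontrivial word $w_k \in \Gamma \ast \mathbb{Z}$ of length at most $C^k$ (for some constant $C = C(|S|)$) that vanishes on the entire ball $B_k \subseteq \Gamma$ of radius $k$ with respect to $S$. Once such words are produced, the lower bound follows immediately: for any $n$, setting $k = \lfloor \log_C n \rfloor$ yields $|w_k|_{S \cup \{x\}} \leq n$ while $\chi_\Gamma^S(w_k) \geq k + 1 \geq \log_C n$, so $\mathcal{M}_\Gamma(n) \geq \log_C n = \Omega(\log n)$.

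The construction will be recursive and relies on two elementary observations. \emph{Translation}: if $w(x) \in \Gamma \ast \mathbb{Z}$ vanishes on $Z \subseteq \Gamma$, then substituting $x \mapsto xs^{-1}$ yields a word vanishing on $Z \cdot s$. \emph{Nested commutators}: for any $u_1, \ldots, u_N \in \Gamma \ast \mathbb{Z}$, the iterated commutator $[u_1, [u_2, \ldots, [u_{N-1}, u_N] \cdots ]]$ vanishes at every $g$ for which some $u_i(g) = e_\Gamma$, and its length is bounded by $2^{N+1} \max_i |u_i|$.

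For the base case, I would take $w_1$ to be a nested commutator of the words $[x, s]$ for $s \in S$. Each such commutator vanishes on the centralizer $C_\Gamma(s) \supseteq \{e, s^{\pm 1}\}$, so $w_1$ vanishes on $B_1$ and has length $O(2^{|S|})$. For the inductive step, given $w_k$ vanishing on $B_k$, I would exploit the decomposition $B_{k+1} = B_k \cup \bigcup_{s \in S^{\pm 1}} B_k \cdot s$ and form the nested commutator of the $2|S|+1$ words $w_k(x)$ and $w_k(xs^{-1})$, one for each $s \in S^{\pm 1}$. By the two observations, this nested commutator vanishes on $B_{k+1}$. Since each translated word has length at most $2|w_k|$, the new word has length at most $2^{2|S|+3} \cdot |w_k|$, yielding the single-exponential recursion $|w_k| \leq C^k$ with $C = 2^{2|S|+3}$.

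The main obstacle will be verifying that the $w_k$ produced this way are genuinely nontrivial in $\Gamma \ast \mathbb{Z}$ (otherwise they give no useful witness in the MIF setting). If $\Gamma$ is not MIF, a short mixed identity $w$ provides directly an element with $\chi_\Gamma^S(w) = +\infty$, and the statement is immediate. If $\Gamma$ is MIF, one must show inductively that the translates $w_k(xs^{-1})$ for distinct $s$ never simultaneously lie in a common cyclic subgroup of $\Gamma \ast \mathbb{Z}$, so that the nested commutators do not collapse; this reduces ultimately to the non-commutativity of the elementary commutators $[x, s]$ and $[x, s']$ for $s \neq s'$ in the free product, which follows from the normal form of elements in $\Gamma \ast \mathbb{Z}$.
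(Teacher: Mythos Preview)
The paper does not prove Theorem~\ref{GeneralLowerBoundThm}; it is simply quoted from \cite{Brad}. So there is no ``paper's proof'' to compare against, and I will assess your argument on its own terms.

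Your overall strategy---building words $w_k$ of length $O(C^k)$ vanishing on $B_k$ via iterated commutators of shifted copies of $w_{k-1}$---is natural, and the length bookkeeping is correct. The genuine gap is in the nontriviality step. You assert that it suffices to show the translates $w_k(xs^{-1})$ ``never simultaneously lie in a common cyclic subgroup'' of $\Gamma\ast\mathbb{Z}$. That condition is (essentially) equivalent to \emph{pairwise} non-commutativity of the inputs, but this does not prevent a nested commutator from collapsing: $[u_1,[u_2,u_3]]=e$ whenever $u_1$ centralizes $[u_2,u_3]$, which can happen even if $u_1,u_2,u_3$ pairwise fail to commute. Your inductive reduction to ``non-commutativity of $[x,s]$ and $[x,s']$'' therefore does not control the higher stages of the nesting, and as written the argument does not establish that any $w_k$ with $k\geq 2$ is nontrivial.

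This is fixable, but it requires more than what you have sketched. One clean route is to bypass the recursion for the nontriviality check: write $B_k=\{g_1,\ldots,g_m\}$ and use the single nested commutator of the elements $g_i^{-1}x$. After the substitution $y=g_m^{-1}x$ one is asking whether a specific left-normed commutator of the elements $y,h_1y,\ldots,h_{m-1}y$ (with the $h_i\in\Gamma$ distinct and nontrivial) vanishes in $\Gamma\ast\langle y\rangle$; this can be verified directly from the normal form in the free product, by tracking the leftmost and rightmost syllables at each nesting step. Alternatively, one can exploit that $\ker\pi_e$ is free on the conjugates $\{gxg^{-1}:g\in\Gamma\}$ and rewrite everything in that basis. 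Either way, the point is that you must argue inductively that at every stage the outer factor fails to commute with the inner nested commutator, not merely with the individual inputs.
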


In light of Theorem \ref{GeneralLowerBoundThm}, 
we shall refer to a finitely generated group 
$\Gamma$ as \emph{sharply MIF} if its MIF 
growth function $\mathcal{M}_{\Gamma}$ 
satisfies $\mathcal{M}_{\Gamma}(n) \ll \log (n)$. 
Prior to this note, 
we did not have any examples of sharply MIF groups. 
In fact the only previously existing explicit upper 
bound on MIF growth was: 
if $\Gamma$ is a finite-rank nonabelian free group 
then $\mathcal{M}_{\Gamma}(n) \ll n \log (n)$ 
(\cite{Brad} Theorem 1.10). 
Employing different methods from those of \cite{Brad}, 
we improve this result for free groups, 
and extend it to many other groups besides. 

\begin{thm}[Theorem \ref{KleinMainThm}] \label{IntroKleinMainThm}
Let $\Gamma$ be a finitely generated 
torsion-free nonelementary Kleinian group. 
Then $\Gamma$ is sharply MIF. 
\end{thm}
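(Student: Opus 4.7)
The plan is to exploit the linearity of $\Gamma \leq \mathrm{PSL}_2(\mathbb{C})$ and combine it with a counting argument inside a free Schottky subgroup of $\Gamma$. Since $\Gamma$ is torsion-free nonelementary, a ping-pong argument using loxodromic elements produces free generators $a,b \in \Gamma$ of a rank-$2$ free subgroup $F = \langle a, b\rangle \leq \Gamma$, with $|a|_S$ and $|b|_S$ bounded by a constant depending only on $S$. The ball $B_F(L)$ of reduced words of length at most $L$ in $F$ has cardinality $\gtrsim 3^L$, and each such word embeds into $\Gamma$ as an element of $S$-word-length $O(L)$.

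Given $w \in \Gamma \ast \mathbb{Z}$ of length at most $n$ that is not a mixed identity for $\Gamma$, I would view the word-map as a regular morphism $w : \mathrm{PSL}_2(\mathbb{C}) \to \mathrm{PSL}_2(\mathbb{C})$ whose coordinates are polynomials of degree $O(n)$ in the matrix entries of the variable. The fibre $V_w := w^{-1}(e)$ is cut out by polynomial equations of degree $O(n)$. Let $H \leq \mathrm{PSL}_2(\mathbb{C})$ denote the Zariski closure of $\Gamma$; since $\Gamma$ is Zariski dense in $H$ and $w$ fails to be a mixed identity for $\Gamma$, the intersection $V_w \cap H$ is a proper algebraic subvariety of $H$. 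The task is then to find $g \in F$ of $S$-word-length $O(\log n)$ with $g \notin V_w$.

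The crux of the argument is a counting estimate of the form
\begin{equation*}
|V_w \cap B_F(L)| \leq C \cdot n^A \cdot L^B
\end{equation*}
for absolute constants $A, B, C$. Given such an estimate, setting $L = C' \log n$ for $C'$ sufficiently large forces $3^L$ to exceed the right-hand side, so $B_F(L) \setminus V_w \neq \emptyset$; any element $g$ there satisfies $w(g) \neq e$ and $|g|_S = O(\log n)$, giving $\mathcal{M}_\Gamma(n) \ll \log n$ as desired. The intuition is that the Schottky generators push $F$ through $H$ at exponential speed --- matrix norms $\|u(a,b)\|$ grow like $e^{c|u|}$ --- so the $\sim 3^L$ matrices form a very thin discrete set that no bounded-degree subvariety can absorb too much of.

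The main obstacle is proving the counting estimate. A Bezout bound along a single one-parameter subgroup of $H$ contributes only $O(n)$ intersections, but $F$ is not a union of finitely many one-parameter subgroups, so these bounds must be recombined across the tree structure of $F$. I would attempt an induction on the degree of $V_w \cap H$, slicing $V_w$ by cosets of one-parameter subgroups generated by prefixes of reduced words and passing to a subvariety of strictly smaller degree after discarding those cosets absorbed whole. A plausible alternative uses height functions on algebraic points in $H$: the matrix entries of $u(a,b)$ for $|u| \leq L$ have height $e^{O(L)}$, and sub-exponential upper bounds of Bombieri--Pila type on algebraic points of bounded height lying on a subvariety should translate into the required estimate. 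A minor but real subtlety is the case when $\Gamma$ is Fuchsian, so that $H$ is a proper subgroup of $\mathrm{PSL}_2(\mathbb{C})$; the argument is designed to run intrinsically inside $H$ and so accommodates this uniformly.
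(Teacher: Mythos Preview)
Your approach differs fundamentally from the paper's, and it has a genuine gap: the entire argument rests on the counting estimate $|V_w \cap B_F(L)| \leq C\, n^A L^B$, which you do not prove, and neither suggested route delivers it. The Bombieri--Pila suggestion is misplaced. Those results bound \emph{rational} or integral points of bounded height on \emph{transcendental} sets; here $V_w$ is algebraic, and for a general Kleinian group the matrix entries of $u(a,b)$ are arbitrary complex numbers with no arithmetic structure, so ``height $e^{O(L)}$'' can only mean archimedean norm, to which no such theorem applies. Even were the group arithmetic, an algebraic subvariety of degree $d$ can carry $\gg H^{c}$ rational points of height $\leq H$ for some fixed $c>0$; with $H \asymp e^{O(L)}$ this is already exponential in $L$, so the growth $3^L$ of $B_F(L)$ does not obviously win. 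The degree-induction sketch is likewise too vague to assess: the branches $a\,B_F(L-1)$, $b\,B_F(L-1)$, $a^{-1}B_F(L-1)$, $b^{-1}B_F(L-1)$ are not contained in algebraic one-parameter cosets, and you give no mechanism for lowering the degree of $V_w$ while keeping the total count polynomial in $n$ and $L$.

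The paper's proof works precisely because it does \emph{not} attempt to count over $\mathbb{C}$. It first invokes a non-trivial structural input --- every finitely generated torsion-free nonelementary Kleinian group is isomorphic to a subgroup of $\SL_2(K)$ for some number field $K$ (Mostow--Prasad rigidity in the finite-covolume case, Brooks' embedding theorem for the geometrically finite infinite-covolume case, and deformation theory reducing the geometrically infinite case to the geometrically finite one) --- and then applies Theorem~\ref{MainNoFieldThm}. There the geometry-over-$\mathbb{C}$ counting is replaced by locating a single prime $\mathcal{P}$ of norm $O(n^2)$ modulo which all constants $c_i$ remain non-central, together with a logarithmic diameter bound for $\SL_2(\mathbb{F}_p)$ coming from the product theorem. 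Your plan deliberately bypasses the passage to a number field, but that passage is exactly what makes the quantitative step go through.
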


Since free groups are Kleinian, 
we can consequently improve upon 
the result from \cite{Brad}. 

\begin{coroll}[Theorem \ref{FreeGroupsThm}] \label{IntroFreeGroupsThm}
Let $\Gamma$ be a free group of finite rank 
at least $2$. 
Then $\Gamma$ is sharply MIF. 
\end{coroll}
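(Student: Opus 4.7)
The plan is to deduce the corollary directly from Theorem~\ref{IntroKleinMainThm}: it suffices to verify that any free group $\Gamma = F_r$ of finite rank $r \geq 2$ is finitely generated, torsion-free, nonelementary, and Kleinian.

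Of these four conditions, the first three are immediate. $F_r$ is finitely generated by definition, torsion-free by the classical normal form for elements of a free group, and nonelementary because a free group of rank at least $2$ is not virtually cyclic (indeed, it already contains two non-commuting elements of infinite order that together generate a subgroup isomorphic to $F_2$).

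The only substantive point is to exhibit $F_r$ as a discrete subgroup of $\PSL_2(\mathbb{C})$, i.e.\ as a Kleinian group. I would do this via the classical Schottky construction: choose $r$ loxodromic M\"obius transformations $g_1,\ldots,g_r \in \PSL_2(\mathbb{C})$ whose attracting and repelling fixed disks in $\widehat{\mathbb{C}}$ form $2r$ pairwise disjoint round disks, arranged so that each $g_i$ maps the exterior of its repelling disk into its attracting disk. A standard ping-pong argument then simultaneously shows that $\langle g_1,\ldots,g_r \rangle \cong F_r$ and that this subgroup is discrete, realising $F_r$ as a torsion-free nonelementary Kleinian group.

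Once the embedding is in place, Theorem~\ref{IntroKleinMainThm} applies directly and gives $\mathcal{M}_\Gamma(n) \ll \log(n)$, so $\Gamma$ is sharply MIF. There is no real obstacle beyond the Schottky construction, which is completely classical; the remainder of the argument is merely an unpacking of definitions. (If one wishes to avoid the Schottky argument altogether, one could alternatively embed $F_r$ into $F_2$ and then invoke any explicit discrete faithful representation of $F_2$ into $\PSL_2(\mathbb{Z}) \leq \PSL_2(\mathbb{C})$, such as the one generated by $\big(\begin{smallmatrix}1 & 2 \\ 0 & 1\end{smallmatrix}\big)$ and $\big(\begin{smallmatrix}1 & 0 \\ 2 & 1\end{smallmatrix}\big)$.)
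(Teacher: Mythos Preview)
Your proposal is correct and matches the paper's own approach: the paper likewise deduces the corollary from Theorem~\ref{IntroKleinMainThm} by observing that finite-rank nonabelian free groups are torsion-free nonelementary Kleinian, and it too offers the Sanov embedding into $\PSL_2(\mathbb{Z})$ as an alternative route. The only difference is cosmetic---the paper merely asserts that free groups are Kleinian, whereas you spell out the Schottky construction explicitly.
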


More generally, Theorem 
\ref{IntroKleinMainThm} implies the following. 

\begin{coroll} \label{IntroMfldsCoroll}
Let $M$ be either 
(i) a closed orientable surface of genus at least $2$, 
and (ii) a closed orientable 
hyperbolic $3$-manifold. 
Then $\Gamma$ is sharply MIF. 
\end{coroll}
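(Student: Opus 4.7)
The plan is to deduce this corollary directly from Theorem \ref{IntroKleinMainThm} by checking, in each of the two cases, that $\Gamma := \pi_1(M)$ is a finitely generated, torsion-free, nonelementary Kleinian group.

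For case (ii), the definition essentially does the work. A closed orientable hyperbolic $3$-manifold is of the form $\mathbb{H}^3/\Gamma$, where $\Gamma$ is a discrete subgroup of $\mathrm{Isom}^+(\mathbb{H}^3) = \PSL_2(\mathbb{C})$ acting freely. Hence $\Gamma$ is by definition a Kleinian group; it is torsion-free because the action is free, and finitely generated because $M$ is compact, so $\Gamma$ is a uniform lattice. Nonelementarity is also immediate: an elementary discrete subgroup of $\PSL_2(\mathbb{C})$ is virtually abelian, whereas $\Gamma$ contains a nonabelian free subgroup (for instance by the Tits alternative, or by a ping-pong argument on two sufficiently generic loxodromic elements).

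For case (i), I would invoke uniformization to put a hyperbolic metric on $M$. Then $\Gamma$ embeds as a discrete, cocompact, torsion-free subgroup of $\PSL_2(\mathbb{R})$. Since $\PSL_2(\mathbb{R})$ sits inside $\PSL_2(\mathbb{C})$ as the stabilizer of a totally geodesic copy of $\mathbb{H}^2$ inside $\mathbb{H}^3$, $\Gamma$ is Fuchsian and in particular Kleinian. Finite generation, torsion-freeness, and nonelementarity then follow exactly as in case (ii) (a closed surface group of genus $\geq 2$ is well known to contain nonabelian free subgroups). Applying Theorem \ref{IntroKleinMainThm} in each case completes the proof.

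There is essentially no obstacle here: the statement is a direct corollary of the preceding theorem. The only mildly non-tautological step is verifying nonelementarity, and this is standard in both settings.
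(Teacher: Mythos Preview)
Your proposal is correct and follows essentially the same approach as the paper: the paper simply notes (citing \cite{Series}) that fundamental groups of closed orientable hyperbolic $2$- and $3$-manifolds are torsion-free nonelementary Kleinian groups, and then invokes Theorem~\ref{KleinMainThm} (equivalently Theorem~\ref{IntroKleinMainThm}). You have spelled out in slightly more detail why these groups are finitely generated, torsion-free, nonelementary, and Kleinian, but the underlying deduction is identical.
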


As these two corollaries illustrate, 
there are large families of torsion-free 
Gromov hyperbolic groups with logarithmic MIF growth. 
One may ask whether these are instances 
of a more general phenomenon. 

\begin{qu} \label{IntroHypGroupsQ}
Let $\Gamma$ be a torsion-free nonelementary 
Gromov hyperbolic group. 
Must $\Gamma$ be sharply MIF? 
\end{qu}

Of necessity, any proof of an affirmative 
answer to Question \ref{IntroHypGroupsQ} 
would require radically different methods from ours, 
since we make essential use of the abundant 
supply of finite quotients of Kleinian groups. 
As such, there is no obvious way to 
apply our proof-strategy to 
groups which are not known to be residually finite. 

More generally still, 
if $\mathbb{K}$ is an algebraically closed field, 
and $\mathbb{G}$ is an algebraic group 
defined over $\mathbb{K}$, 
then for any $w \in \mathbb{G} \ast \mathbb{Z}$, 
the associated word-map 
$w : \mathbb{G} \rightarrow \mathbb{G}$ 
is a morphism of algebraic varieties, 
so that $w^{-1} (e_{\mathbb{G}}) \subseteq \mathbb{G}$ 
is Zariski-closed. 
Therefore, if $\mathbb{G}$ is MIF 
then so is any Zariski-dense subgroup 
$\Gamma \leq \mathbb{G}$. 
Here we focus on the case 
$\mathbb{G}=\PGL_d(\mathbb{C})$: 
by Theorem 5 of \cite{Toma}, 
$\PGL_d(\mathbb{C})$ is MIF. 
Our most general result makes gives 
sharp bounds on the MIF growth 
of certain finitely generated Zariski-dense 
subgroups of $\PGL_d(\mathbb{C})$ 
(see Theorem \ref{MainNoFieldThm} below). 

\begin{thm} \label{IntroMainThm}
Let $d \geq 2$ and let $K$ be a number field. 
Let $\Gamma \leq \PSL_d(K)$ be finitely generated 
and Zariski-dense in $\PGL_d(\mathbb{C})$. 
If $d \geq 3$ suppose that: 
\begin{equation} \label{TraceEq}
K = \mathbb{Q}\big( \lbrace \tr(\Ad (g)) : g \in \Gamma \rbrace \big). 
\end{equation} 
Then $\Gamma$ is sharply MIF. 
\end{thm}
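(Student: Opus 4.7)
My plan is to prove the theorem by combining three ingredients: Zariski-density of $\Gamma$ (which makes the word map algebraic), strong approximation (which ensures large images in finite quotients), and the logarithmic diameter of finite simple groups of Lie type (which lets us lift short elements). Since $\PGL_d(\mathbb{C})$ is MIF by \cite{Toma}, Zariski-density transfers the MIF property to $\Gamma$; moreover, any $w \in \Gamma \ast \mathbb{Z}$ of length $n$ extends to a rational word map $w : \PGL_d \to \PGL_d$ defined over $K$, whose vanishing locus $V_w := w^{-1}(e)$ is a proper $K$-subvariety cut out by polynomials of degree $O_d(n)$ in the matrix coordinates (each occurrence of $x^{\pm 1}$ contributing bounded degree, the only care being to treat inverses via cofactor matrices in $\SL_d$).

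Next, I would let $\mathcal{O}_K$ be the ring of integers of $K$: after inverting a finite set $T$ of primes, $\Gamma$ embeds in $\PSL_d(\mathcal{O}_K[T^{-1}])$, and for each prime $\mathfrak{p} \notin T$ with residue field $\mathbb{F}_q$ there is a reduction $\pi_{\mathfrak{p}} : \Gamma \to \PSL_d(\mathbb{F}_q)$. Strong approximation of Nori--Weisfeiler type — using Zariski-density of $\Gamma$ and, when $d \geq 3$, the trace-field hypothesis (\ref{TraceEq}) to exclude proper $K$-forms of $\PGL_d$ — should yield that $\pi_{\mathfrak{p}}(\Gamma)$ contains $\PSL_d(\mathbb{F}_q)$ (or a subgroup of bounded index) for all but finitely many $\mathfrak{p}$. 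The logarithmic diameter theorems of Breuillard--Green--Tao and Pyber--Szabó then give $\diam(\Cay(\PSL_d(\mathbb{F}_q), \pi_{\mathfrak{p}}(S))) = O(\log q)$, uniformly in $q$.

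Now I would choose, via the prime ideal theorem for $K$, a prime $\mathfrak{p}$ with $q \asymp n^C$ for some constant $C = C(d)$ large enough that the Schwartz--Zippel / Lang--Weil bound $|V_w(\mathbb{F}_q)| \ll n \cdot q^{d^2 - 2}$ is strictly dominated by $|\PSL_d(\mathbb{F}_q)| \asymp q^{d^2 - 1}$. Then some $h \in \pi_{\mathfrak{p}}(\Gamma) \setminus V_w(\mathbb{F}_q)$ exists, and lifting $h$ through $\pi_{\mathfrak{p}}$ by the diameter bound produces $g \in \Gamma$ of word-length $O(\log q) = O(\log n)$ with $\pi_{\mathfrak{p}}(w(g)) \neq e$, hence $w(g) \neq e$ in $\Gamma$, as required.

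The main obstacle I foresee is the clean invocation of strong approximation in the higher-rank case $d \geq 3$: the hypothesis (\ref{TraceEq}) is exactly what forbids $\Gamma$ from sitting inside a proper $K$-form of $\PGL_d$, whose mod-$\mathfrak{p}$ reductions would miss a positive proportion of $\PSL_d(\mathbb{F}_q)$, but routing this through the Nori--Weisfeiler machinery with sufficient uniformity in $\mathfrak{p}$ takes care. A subsidiary task is verifying the linear-in-$n$ degree bound for $V_w$, which forces one to handle $w$ as a rational map on $\PGL_d$ with separate control on numerator and denominator; and one must also check that the diameter and strong-approximation statements apply simultaneously for the same $\mathfrak{p}$, which is straightforward since both exclude only finitely many primes.
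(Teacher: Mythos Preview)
Your architecture matches the paper's: reduce modulo a prime $\mathfrak{p}$ of size polynomial in $n$, invoke strong approximation (Weisfeiler \cite{Weis} for $d\geq 3$, an ad hoc argument via Lemma~\ref{SL2SubgrpLem} for $d=2$) for surjectivity onto $\SL_d(\mathbb{F}_q)$, use the product theorem to get logarithmic diameter, and lift a non-solution from the finite quotient. The one substantive difference is how a non-solution is found in $\PSL_d(\mathbb{F}_q)$: the paper cites Theorem~\ref{PSLThmBST} from \cite{BST} (no mixed identity of length $\leq cp$ in $\PSL_d(p)$), whereas you propose a direct Schwartz--Zippel count on $V_w(\mathbb{F}_q)$. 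These are close in spirit and yours is more self-contained; the paper's route is cleaner because the algebraic-geometric estimate is packaged into a citable black box.

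There is, however, a step you have genuinely skipped. Your bound $|V_w(\mathbb{F}_q)| \ll n\, q^{d^2-2}$ presupposes that $V_w$ remains a \emph{proper} subvariety of $\PSL_d$ after reduction modulo $\mathfrak{p}$ --- equivalently, that the reduced word $\overline{w}\in\PSL_d(\mathbb{F}_q)\ast\mathbb{Z}$ does not collapse to the identity. But the constants $c_i$ have entries of height $\exp(O(n))$ (Lemma~\ref{HeightBoundLem}), so for many primes some or all $\overline{c}_i$ become central, and for such $\mathfrak{p}$ the fibre $V_w\bmod\mathfrak{p}$ may be all of $\PSL_d$. Excluding these bad primes within your window $q\asymp n^C$ is exactly the content of Proposition~\ref{GoodPrimesPropn}(iii): one manufactures a nonzero $z\in\mathcal{O}_K$ of $\mu$-value $\exp(O(n^2))$ that every bad prime must divide, and compares with Lemma~\ref{ManySplitLem}. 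Your sketch needs an analogous height-counting argument (e.g.\ isolate one nonzero coefficient of a defining equation of $V_w$ and avoid its prime divisors); it does not follow from the prime ideal theorem alone. A secondary imprecision: the product theorem of \cite{BGT,PySz} by itself yields only polylogarithmic diameter; the paper upgrades this to $O(\log q)$ in Proposition~\ref{LogDiamProp} by feeding in an initial ball of size $q^{c}$, obtained from the injectivity-radius Lemma~\ref{InjRadLem} together with the exponential word growth of $\Gamma$.
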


Here $\Ad$ denotes the adjoint representation of 
$\mathbb{G}$ on the associated Lie algebra. 
It is well known that every nonelementary 
Kleinian group satisfies the conclusion 
of Theorem \ref{IntroMainThm} with $d=2$, 
so Theorem \ref{IntroKleinMainThm} follows 
immediately. 
Other examples 
which clearly satisfy the conditions of 
Theorem \ref{IntroMainThm} are the following 
(see Section \ref{AppSect} below). 

\begin{ex} \label{IntroPLSdEx}
For every $d \geq 2$, 
$\PSL_d (\mathbb{Z})$ is sharply MIF; 
so too is every finitely generated 
subgroup $\Gamma$ of $\PSL_d (\mathbb{Z})$ 
which is Zariski-dense in $\PGL_d(\mathbb{C})$. 
\end{ex}

There has been much interest in recent years 
in so-called \emph{thin} groups, 
that is, 
subgroups of an arithmetic group 
which have infinite index but are nevertheless 
Zariski-dense in the ambient algebraic group. 
Specifically, there are many interesting open 
questions asking which group-theoretic 
properties a thin group must satisfy \cite{BreOh}. 
Theorem \ref{IntroMainThm} shows that, 
beside $\PSL_d (\mathbb{Z})$ itself, 
any finitely generated thin subgroup 
of $\PSL_d (\mathbb{Z})$ must be sharply MIF. 

\begin{rmrk}
\normalfont
One could also define MIF growth in terms of 
words with constants in $k$ variables, 
for any $k \in \mathbb{N}$, 
by considering the complexity in $\Gamma$ 
of elements of $\Gamma \ast \mathbb{F}_k$, 
where $\mathbb{F}_k$ is a free group of rank $k$ 
(with the complexity of $w\in\Gamma\ast\mathbb{F}_k$ 
being defined in terms of an appropriate metric 
on $\Gamma^k$). 
However, whether or not the the group is MIF, 
and the asymptotic behaviour 
of the MIF growth function 
will be the same, irrespective of which (fixed) 
value of $k$ we use to define it 
(see Lemma 2.2 of \cite{BST}); 
for instance, whether or not $\Gamma$ is sharply MIF 
does not depend on the value of $k$. 
For simplicity we therefore 
restrict to the one-variable case. 
\end{rmrk}

All of our upper bounds on MIF growth are based on 
the existence of a rich supply of finite quotients 
of $\Gamma$ which have no short mixed identities, 
and with good expansion properties. 
The lengths of mixed identities for finite groups 
were studied by the authors in \cite{BST}, 
where the following result was proved 
(see Theorem 3 of \cite{BST}, 
of which the following is a special case). 

\begin{thm} \label{PSLThmBST}
There exists $c > 0$ such that for all 
primes $p$ and all $d \geq 2$, 
$\PSL_d (p)$ has no mixed identity 
of length $\leq cp$. 
\end{thm}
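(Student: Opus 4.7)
The plan is to derive a contradiction from the assumption that $w \in \PSL_d(\mathbb{F}_p) \ast \mathbb{Z}$ is a nontrivial mixed identity of length $n \leq cp$. Specifically, I aim to show that, up to lifting constants to $\SL_d(\mathbb{F}_p)$, $w$ is forced to be a mixed identity for the algebraic group $\PSL_d(\overline{\mathbb{F}_p})$, contradicting the fact that this group is MIF (the analogue of Tomanov's theorem for $\PGL_d(\mathbb{C})$ in positive characteristic, which goes through by essentially the same algebraic-geometric argument).

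Write $w = c_0 x^{e_1} c_1 \cdots x^{e_k} c_k$ with the $c_i$ lifted arbitrarily to $\SL_d(\mathbb{F}_p)$; the length bound gives $\sum_i |e_i| \leq n$. Let $U(t) = I + tE_{12} \in \SL_d$ be the standard transvection one-parameter subgroup, and fix $g \in \SL_d(\mathbb{F}_p)$. Since $U(t)^{\pm 1} = I \pm tE_{12}$ has entries polynomial in $t$ of degree $\leq 1$, each factor $(U(t)g)^{e_i}$ has matrix entries polynomial in $t$ of degree at most $|e_i|$, so $w(U(t)g)$ has entries in $\mathbb{F}_p[t]$ of degree at most $\sum_i |e_i| \leq n$. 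The mixed-identity hypothesis forces $w(U(t)g)$ to be a scalar matrix in $\mu_d$ for every $t \in \mathbb{F}_p$; hence each off-diagonal entry, and each difference of two diagonal entries, of $w(U(t)g)$ is a polynomial in $t$ of degree $\leq n$ vanishing on all $p$ elements of $\mathbb{F}_p$. Provided $n < p$, each such polynomial vanishes identically, so $w(U(t)g) = \lambda(t) I$ as an identity of matrices over $\overline{\mathbb{F}_p}[t]$, with $\lambda(t)^d = 1$; since $\overline{\mathbb{F}_p}[t]$ is a domain, $\lambda$ must be a constant root of unity. Evaluating at $t=0$ gives $w(U(t)g) = w(g)$ for all $t \in \overline{\mathbb{F}_p}$ and every $g \in \SL_d(\mathbb{F}_p)$.

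The next step is to upgrade this left $U$-invariance at $\mathbb{F}_p$-points of $\SL_d$ to an identity on all of $\SL_d(\overline{\mathbb{F}_p})$; once this is in hand, applying the same substitution to every conjugate subgroup $hUh^{-1}$ gives left invariance of $w$ under every transvection one-parameter subgroup, and since transvections generate $\SL_d$ this forces $w(g) = w(e_{\SL_d}) \in \mu_d$ for every $g \in \SL_d(\overline{\mathbb{F}_p})$, so $w$ is a mixed identity for $\PSL_d(\overline{\mathbb{F}_p})$ and we have our contradiction. The extension to $\overline{\mathbb{F}_p}$-points is a Schwartz--Zippel / Lang--Weil estimate: fixing $t$, the equation $w(U(t)g) = w(g)$ is a polynomial identity in the entries of $g$ whose degree is controlled by $n$ and $d$, and it already holds on the $\sim p^{d^2-1}$ points of $\SL_d(\mathbb{F}_p)$, which force it to hold identically once $p$ is large enough relative to the degree. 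The main technical obstacle is to carry this out with a constant $c$ independent of $d$: the naive degree bounds pick up factors of $d-1$ coming from matrix inversion in negative powers, and achieving a bound linear in $p$ with $d$-independent constant appears to require either bypassing the Schwartz--Zippel step by exploiting the coset-invariance structure directly, or combining several transvection directions simultaneously so that the dimensional losses cancel.
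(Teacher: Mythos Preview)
This theorem is not proved in the present paper at all: it is quoted as a special case of Theorem~3 of \cite{BST} and used as a black box. There is therefore no ``paper's own proof'' to compare against; the argument you would need to look at lives in \cite{BST}.

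That said, your proposal is not a proof of the stated result, and you say so yourself. The one-parameter substitution $x\mapsto U(t)g$ and the degree count giving $\deg_t \leq \sum_i|e_i|\leq n$ are correct, and the conclusion that $w(U(t)g)=w(g)$ for all $t\in\overline{\mathbb{F}_p}$ and all $g\in\SL_d(\mathbb{F}_p)$ is valid once $n<p$. The difficulty is entirely in your step~6. To propagate this coset-invariance from $g\in\SL_d(\mathbb{F}_p)$ to $g\in\SL_d(\overline{\mathbb{F}_p})$ by a Schwartz--Zippel or Lang--Weil argument, you must control the degree (in the entries of $g$) of the polynomial system $w(U(t)g)=w(g)$; as you note, negative powers of $g$ cost a factor of $d-1$ each via the adjugate, so the degree bound you get is of order $(d-1)n$, not $n$. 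Any such counting argument on a $(d^2-1)$-dimensional variety will then produce a constant depending on $d$, which is exactly what the theorem forbids. Your final paragraph lists two possible escape routes but carries out neither; without one of them, what you have written yields at best the weaker statement ``for each $d$ there is $c_d>0$ such that \ldots'', and even that requires filling in the Lang--Weil step carefully (Schwartz--Zippel does not apply directly to the hypersurface $\SL_d\subset\mathbb{M}_d$).

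There is also a smaller issue of circularity in the final step: you invoke the MIF property of $\PSL_d(\overline{\mathbb{F}_p})$ as ``the analogue of Tomanov's theorem \ldots\ which goes through by essentially the same algebraic-geometric argument''. This is true, but it is worth being explicit that you are relying on it, since the whole point of the theorem is a quantitative refinement of exactly this MIF statement.
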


The basic proof-strategy of all our upper bounds 
on $\mathcal{M}_{\Gamma}$ in this note is the same. 
To ease the understanding of the 
general proof to follow, let us sketch 
the argument in the special case 
of $\Gamma$ a nonabelian free group, 
with finite free basis $S$. 
First, we faithfully represent $\Gamma$ 
as a subgroup of $\SL_2 (\mathbb{Z})$. 
Let $w(x) = x^{a_0} c_1 x^{a_1} \cdots c_k x^{a_k}$ 
be a nontrivial reduced word in $\Gamma\ast\mathbb{Z}$ 
so that all $c_i \in \SL_2 (\mathbb{Z})$ 
are nontrivial (indeed non-central). 
For $p$ a prime number, let 
$\pi_p : \SL_2 (\mathbb{Z}) \rightarrow \SL_2(p)$ 
be reduction modulo $p$. 
By the Prime Number Theorem, 
there exists $p$ of moderate size such that 
all $\pi_p (c_i)\in\SL_2 (p)$ are still non-central. 
By Theorem \ref{PSLThmBST}, 
there exists $\overline{g} \in \SL_2(p)$ 
for which $\overline{w} (\overline{g})$ 
is non-central in $\SL_2(p)$, 
where $\overline{w}(x) 
= x^{a_0} \pi_p(c_1) x^{a_1} \cdots \pi_p(c_k) x^{a_k} 
\in \SL_2(p) \ast \mathbb{Z}$. 
We would like to lift $\overline{g}$ 
to $g \in \Gamma$, so that $w(g) \neq e$. 
By deep results on the expansion 
properties of $\SL_2(p)$, 
not only is the restriction of $\pi_p$ to $\Gamma$ 
surjective (so that such a lift $g$ exists), 
but the Cayley graph of $\SL_2 (p)$ 
with respect to generating set $\pi_p(S)$ 
has small diameter, 
so that we can choose that lift $g$ to have 
small word-length with respect to $S$. 

In the setting of Theorem \ref{IntroMainThm}, 
we will be working over a number field $K$, 
so instead of reductions modulo 
a rational prime, 
we shall be considering reductions 
$\pi_{\mathcal{P}}$ modulo a prime element 
$\mathcal{P}$ in the ring of integers of $K$. 
If $d \geq 3$, the surjectivity of 
$\pi_{\mathcal{P}}$ is not such a straightforward matter as it is for $d=2$; 
the technical hypothesis (\ref{TraceEq}) 
is assumed in Theorem \ref{IntroMainThm}, 
so that surjectivity is guaranteed, 
by an appropriate ``Strong Approximation'' Theorem. 

In light of our results as described above, 
one might wonder whether in fact \emph{every} 
finitely generated MIF group is sharply MIF. 
It transpires that this is not the case. 
It was noted already in \cite{Brad} (Remark 9.3) 
that to find a counterexample, 
it would be sufficient to find a finitely 
generated MIF group of subexponential word growth. 
Following circulation of an earlier version 
of our results, N. Matte Bon kindly pointed out to us 
that a construction of Nekrashevych, 
based on ideas from topological dynamics, 
provides such a group. 
As such we have the following. 

\begin{thm} \label{NonSharpMIFThm}
There exists a finitely generated MIF group 
which is not sharply MIF. 
\end{thm}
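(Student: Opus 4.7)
The plan is to combine two observations. First, Remark~9.3 of \cite{Brad} asserts that any finitely generated MIF group of subexponential word growth is automatically not sharply MIF. The underlying idea is a counting argument: in a sharply MIF group $\Gamma$, the evaluation map $w \mapsto (w(g))_{g}$, taken over $g$ in a ball of radius $O(\log n)$ in $\Gamma$, injects the exponentially many words of length $\leq n$ in $\Gamma \ast \mathbb{Z}$ into a cartesian product indexed by that ball, with each coordinate constrained to a ball of radius $O(n \log n)$ in $\Gamma$; the two size estimates are only consistent when $\Gamma$ has exponential growth. Granting this reduction, our task becomes the production of a single finitely generated MIF group of subexponential word growth.

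For the construction, I would invoke Nekrashevych's recent examples of finitely generated simple groups of intermediate (and hence subexponential) growth, realised as topological full groups of suitable minimal actions on a Cantor set $X$. Intermediate growth is immediate from Nekrashevych's original analysis; what remains (and what Matte Bon pointed out) is that such a group is MIF.

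The main obstacle, then, is verifying the MIF property for Nekrashevych's group $\Gamma$. Here I would lean on the dynamical description: $\Gamma$ acts faithfully and minimally on $X$, and contains elements with arbitrarily small prescribed clopen supports. Given a non-trivial $w = c_0 x^{a_1} c_1 \cdots x^{a_k} c_k \in \Gamma \ast \mathbb{Z}$ with each $a_i \neq 0$ and each $c_i \in \Gamma$, I would select $g \in \Gamma$ whose support lies in a very small clopen set $U \subseteq X$, chosen via minimality so that the translates of $U$ by the partial products of the $c_i$ are pairwise disjoint and avoid the supports of the $c_i$ themselves. A careful tracking of the iterated action of $c_0 g^{a_1} c_1 \cdots g^{a_k} c_k$ on a point of $U$ should then show that this element moves the point, whence $w(g) \neq e_\Gamma$ by faithfulness of the action. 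Formalising this argument -- especially controlling possible cancellations between powers of $g$ and the intrinsic dynamics of the $c_i$ -- is where the real technical work lies; the abundance of elements of $\Gamma$ with prescribable supports, built into Nekrashevych's construction, is what makes such a choice of $g$ possible.
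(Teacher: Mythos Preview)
Your overall architecture matches the paper exactly: take a Nekrashevych group of intermediate growth and invoke Remark~9.3 of \cite{Brad} to conclude it cannot be sharply MIF, reducing the problem to verifying MIF. The divergence is entirely in how MIF is established.

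Your proposed MIF argument has a concrete gap. You want to choose a small clopen $U$ so that its translates under the partial products of the constants are pairwise disjoint \emph{and} avoid the supports of the $c_i$. But the $c_i$ are arbitrary elements of $\Gamma$, and an element of a topological full group of a minimal action can perfectly well have support equal to all of $X$ (already the generator of a minimal $\mathbb{Z}$-subshift has no fixed points). In that case no nonempty $U$ avoids $\supp(c_i)$, and the tracking argument collapses: once a translate of your basepoint lands in $\supp(c_i)$, the constant $c_i$ moves it in an uncontrolled way, and there is no mechanism preventing later cancellation. You flag this as ``where the real technical work lies,'' but it is not merely technical --- the hypothesis you need is generically false, and the sketch as written does not suggest a workaround.

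The paper sidesteps this entirely. Rather than a bespoke small-support argument, it first proves (Proposition~\ref{HTProp}) that the topological full group acts highly transitively on any infinite orbit $\Omega$; since $F\cong T(\alpha)$ and $\Gamma=[F,F]$ has finite index, $\Gamma$ inherits a faithful highly transitive action on $\Omega$, and density of $\Omega$ in $X$ ensures no nontrivial element of $\Gamma$ has finite support there. Then it invokes Theorem~5.9 of Hull--Osin \cite{HullOsin}: a group with a faithful highly transitive action is MIF unless it contains the infinite alternating group as a normal subgroup. Simplicity of $\Gamma$ would force $\Gamma$ to \emph{equal} that alternating group, contradicting finite generation. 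This route trades your hands-on dynamics for a clean black box, and it is robust precisely because it never needs the constants $c_i$ to have small support.
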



\section{Linear groups over number fields}

Let $K$ be a number field, 
with $\lvert K : \mathbb{Q} \rvert = D$. 
Let $C_0 > 0$ (to be chosen). 
Let $P_n$ be the set of rational primes 
$p$ satisfying $C_0 n < p \leq C_0 n^2$, 
and let $Q_n \subseteq P_n$ be the subset of 
those primes which split completely over $K$. 

\begin{lem} \label{ManySplitLem}
For all $c > 0$, 
if $C_0$ is chosen sufficiently large 
(depending on $c$ and $D$) then
for all $n \gg_D 1$, 
\begin{equation*}
\prod_{p \in Q_n} p \geq \exp(c n^2)
\end{equation*}
\end{lem}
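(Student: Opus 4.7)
My plan is to apply the effective Chebotarev density theorem (or equivalently Landau's prime ideal theorem) to count primes in $Q_n$, then estimate the product via partial summation.

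The first step is to reduce to a Galois setting: a rational prime $p$ splits completely in $K$ if and only if it splits completely in the Galois closure $\widetilde{K}$ of $K$ over $\mathbb{Q}$. Set $L = [\widetilde{K}:\mathbb{Q}]$, which depends only on $D$ (in fact $L \leq D!$). By Chebotarev's density theorem applied to the trivial conjugacy class in $\mathrm{Gal}(\widetilde{K}/\mathbb{Q})$, the natural density of primes splitting completely in $K$ is $1/L$. In quantitative form, letting $\pi_K(x)$ denote the number of such primes up to $x$, one has $\pi_K(x) \sim \frac{1}{L} \cdot \frac{x}{\log x}$ as $x \to \infty$.

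Second, I would pass to the corresponding Chebyshev-type sum. Writing $\theta_K(x) = \sum_{p \leq x, \; p \text{ splits in } K} \log p$, a routine partial summation (or a direct appeal to the prime ideal theorem, summing $\log p$ for primes of degree one in $K$ and noting that higher-degree primes contribute at most $O(\sqrt{x} \log x)$) yields $\theta_K(x) \sim x/L$. Applied to the interval $(C_0 n, C_0 n^2]$, this gives
\begin{equation*}
\sum_{p \in Q_n} \log p = \theta_K(C_0 n^2) - \theta_K(C_0 n) = \frac{C_0 n^2}{L} + o(n^2).
\end{equation*}
In particular, for $n$ sufficiently large depending on $D$, the right-hand side is at least $C_0 n^2/(2L)$.

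Finally, exponentiating and choosing $C_0 \geq 2cL$ (which depends only on $c$ and $D$) gives
\begin{equation*}
\prod_{p \in Q_n} p = \exp\!\Bigl( \sum_{p \in Q_n} \log p \Bigr) \geq \exp\!\Bigl( \tfrac{C_0}{2L} n^2 \Bigr) \geq \exp(c n^2),
\end{equation*}
as required. The only non-elementary ingredient is the quantitative Chebotarev / prime ideal theorem, but since we only need the correct leading order with an acceptable error, no effective bounds (such as those under GRH) are needed, and the main "obstacle" is purely bookkeeping: controlling the error term in $\theta_K$ on the short-ish interval $(C_0 n, C_0 n^2]$, which is harmless because its length grows like $n^2$.
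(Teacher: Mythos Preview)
Your proposal is correct and takes essentially the same approach as the paper: both combine the Prime Number Theorem with Chebotarev's density theorem to show that $\sum_{p \in Q_n} \log p$ grows like a constant times $C_0 n^2$, then choose $C_0$ large. Your formulation via $\theta_K(x) \sim x/L$ and partial summation is in fact a bit cleaner than the paper's product-of-primes phrasing, but the underlying argument is identical.
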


\begin{proof}
By the Prime Number Theorem, 
the product $p(x)$ of all rational primes 
at most $x$ satisfies: 
\begin{equation*}
\lim_{x \rightarrow \infty} \log (p(x))/x = 1.
\end{equation*}
By Chebotarev's Density Theorem 
the proportion of rational primes at most $C n^2$ 
which split completely over $K$ 
is at least $1/2D$ (for all $n$ larger than 
a constant depending only on $K$), 
hence the product of all such primes is 
at least $p(C_0 n^2/2D)$. 
Thus: 
\begin{equation*}
\prod_{p \in Q_n} p \geq p(C_0 n^2/2D)/p(C_0 n)
\end{equation*}
and the conclusion follows. 
\end{proof}

Let $\mathcal{O}_K$ be the ring of integers of $K$. 
Let $\sigma_1 , \ldots , \sigma_D :
K \hookrightarrow \mathbb{C}$ be the distinct 
field embeddings of $K$ into $\mathbb{C}$. 
Recall that the \emph{norm} of an element 
$x \in K$ is given by: 
\begin{equation*}
N_{K/\mathbb{Q}} (x) = \prod_{i=1} ^D \sigma_i (x). 
\end{equation*}
For $\alpha \in \mathcal{O}_K$ define: 
\begin{equation*}
\mu (\alpha) = 
\max_{1 \leq i \leq D} \lvert \sigma_i (\alpha) \rvert
\end{equation*}
Then for all $\alpha , \beta \in \mathcal{O}_K$: 
\begin{itemize}
\item[(i)] $\mu(\alpha+\beta) \leq \mu(\alpha) + \mu(\beta)$

\item[(ii)] $\mu(\alpha\beta) \leq \mu(\alpha)\mu(\beta)$

\item[(iii)] If $\alpha \neq 0$ 
then $\mu (\alpha) \geq 1$; 

\item[(iv)] $\mu (\alpha)^D 
\geq N_{K/\mathbb{Q}} (\alpha)$. 
\end{itemize}
Thus for $p$ a rational prime 
and $\mathcal{P} \in \mathcal{O}_K$ 
a $K$-prime dividing $p$, 
$p$ divides $N_{K/\mathbb{Q}}(\mathcal{P})$, 
hence $\mu(\mathcal{P}) \geq p^{1/D}$. 

Let $\Gamma \leq \SL_d (K)$ be generated by 
the finite set $S$, 
which we may assume to be symmetric 
and to contain $I_d$. 
Let $\mathbb{Z} = \langle x \rangle$, 
so that $S \cup \lbrace x \rbrace$ 
generates $\Gamma \ast \mathbb{Z}$. 
Our main technical result is as follows. 

\begin{thm} \label{MainNoFieldThm}
Suppose $\Gamma \leq \SL_d (K)$ is Zariski-dense 
in $\SL_d (\mathbb{C})$. 
If $d \geq 3$, suppose that 
$K$ and $\Gamma$ satisfy condition (\ref{TraceEq}). 
There exists $C=C(S)>0$ such that the following holds. 
Let: 
\begin{equation} \label{worddefneqn}
w(x) = x^{a_0} c_1 x^{a_1} \cdots c_k x^{a_k} 
\in \Gamma \ast \mathbb{Z}
\end{equation}
for some $c_i \in \Gamma \setminus Z(\Gamma)$ 
and integers $a_i$ 
with $a_i \neq 0$ for $1 \leq i \leq k-1$. 
Suppose that $\lvert w\rvert_{S\cup\lbrace x \rbrace}
= n$. 
Then there exists $g \in \Gamma$ 
such that $\lvert g \rvert_S \leq C\log(n)$ 
and $w(g) \notin Z(\Gamma)$. 
\end{thm}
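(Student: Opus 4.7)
The plan is to adapt the sketch from the Introduction: (1) choose a $K$-prime $\mathcal{P}$ over a rational prime $p \in Q_n$ such that every reduction $\pi_\mathcal{P}(c_i) \in \SL_d(\mathbb{F}_p)$ remains non-central and $\pi_\mathcal{P}(\Gamma) = \SL_d(\mathbb{F}_p)$; (2) apply Theorem \ref{PSLThmBST} to find some $\overline{g} \in \SL_d(\mathbb{F}_p)$ for which the reduced word $\overline{w}$ satisfies $\overline{w}(\overline{g}) \notin Z(\SL_d(\mathbb{F}_p))$; (3) lift $\overline{g}$ back to $g \in \Gamma$ of word-length $O_S(\log n)$ using uniform expansion of the Cayley graphs of $\SL_d(\mathbb{F}_p)$, then conclude via Zariski-density that $w(g) \notin Z(\Gamma)$.

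For step (1), the non-scalarness of each $c_i$ provides a nonzero witness $\alpha_i \in \mathcal{O}_K$, say a suitably cleared off-diagonal entry or difference of diagonal entries of $c_i$. Expressing $c_i$ as a product of $\leq \lvert c_i \rvert_S$ elements of $S$ and applying properties (i)--(ii) of $\mu$, we obtain $\mu(\alpha_i) \leq \exp(O_S(\lvert c_i \rvert_S))$, hence $\mu\bigl(\prod_i \alpha_i\bigr) \leq \exp(O_S(n))$ since $\sum_i \lvert c_i \rvert_S \leq n$; by property (iv) the norm $\bigl\lvert N_{K/\mathbb{Q}}\bigl(\prod_i \alpha_i\bigr)\bigr\rvert$ is at most $\exp(O_S(n))$, so only $O_S(n)$ rational primes divide this product. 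Lemma \ref{ManySplitLem} (applied with a suitably large $c$) makes the product of primes in $Q_n$ exceed $\exp(c n^2)$, hence provides $\Omega(n^2/\log n)$ split primes --- far more than the $O_S(n)$ ``bad'' primes, the finitely many primes dividing denominators of $S$, and the finitely many primes excluded by Strong Approximation. Any remaining $p \in Q_n$, together with any $K$-prime $\mathcal{P} \mid p$, gives $\mathcal{O}_K/\mathcal{P} \cong \mathbb{F}_p$, $\pi_\mathcal{P}(c_i) \notin Z(\SL_d(\mathbb{F}_p))$ for all $i$, and $\pi_\mathcal{P}(\Gamma) = \SL_d(\mathbb{F}_p)$; this last surjectivity is standard for $d=2$, and for $d \geq 3$ follows from the Matthews--Vaserstein--Weisfeiler Strong Approximation Theorem, for which the trace-field hypothesis (\ref{TraceEq}) is precisely what is needed.

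For step (2), $\overline{w} \in \SL_d(\mathbb{F}_p) \ast \mathbb{Z}$ has length at most $n$, and choosing $C_0 > 1/c_*$ (with $c_*$ the constant in Theorem \ref{PSLThmBST}) gives $c_* p > n$, so Theorem \ref{PSLThmBST} (applied after projection to $\PSL_d(p)$, where the $\pi_\mathcal{P}(c_i)$ remain nontrivial by step (1)) furnishes $\overline{g}$ with $\overline{w}(\overline{g}) \notin Z(\SL_d(\mathbb{F}_p))$. For step (3), uniform expansion of the family $\bigl\{\Cay(\SL_d(\mathbb{F}_p), \pi_\mathcal{P}(S))\bigr\}_\mathcal{P}$ --- the super-strong approximation theorem of Salehi Golsefidy--Varj\'u, building on Bourgain--Gamburd and Helfgott --- bounds their diameter by $O_S(\log p) \leq O_S(\log n)$, yielding a lift $g \in \Gamma$ of $\overline{g}$ with $\lvert g \rvert_S \leq C \log n$. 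Then $\pi_\mathcal{P}(w(g)) = \overline{w}(\overline{g}) \notin Z(\SL_d(\mathbb{F}_p))$, so $w(g)$ is not scalar in $\SL_d(K)$; Zariski-density of $\Gamma$ gives $Z(\Gamma) \subseteq Z(\SL_d(\mathbb{C}))$ and hence $w(g) \notin Z(\Gamma)$, as required. The main technical hurdle is the simultaneous marshalling of these three number-theoretic inputs: the window $C_0 n < p \leq C_0 n^2$ is tuned precisely so that $p$ is large enough for Theorem \ref{PSLThmBST}, while $\log p = O(\log n)$ remains small enough for the expander diameter bound, with (\ref{TraceEq}) being the price paid for Strong Approximation in higher rank.
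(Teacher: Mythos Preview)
Your proof is correct and follows the same three-step architecture as the paper: locate a good prime $p\in Q_n$ with a $K$-prime $\mathcal{P}\mid p$, apply Theorem~\ref{PSLThmBST} in $\PSL_d(p)$, then lift via a logarithmic diameter bound. Two tactical differences are worth noting. In step~(1), you witness non-scalarness of $c_i$ by a single entry (or diagonal difference) of $c_i^\ast$, obtaining $\mu\bigl(\prod_i\alpha_i\bigr)\le\exp(O_S(n))$ via $\sum_i\lvert c_i\rvert_S\le n$, and then simply count that only $O_S(n)$ primes are bad against $\lvert Q_n\rvert=\Omega(n^2/\log n)$; the paper instead takes products of entries of the commutators $s^\ast c_i^\ast - c_i^\ast s^\ast$, bounds the result by $\exp(O_S(n^2))$, and argues by contradiction against $\prod_{p\in Q_n}p\ge\exp(cn^2)$ from Lemma~\ref{ManySplitLem}. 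Your bookkeeping here is a bit sharper. In step~(3), you invoke the full super-strong approximation machinery to get uniform expansion and hence logarithmic diameter; the paper instead derives the diameter bound more directly (Proposition~\ref{LogDiamProp}): Lemma~\ref{InjRadLem} plus exponential growth of $\Gamma$ gives $\lvert\pi_\mathcal{P}(B_S(\tilde c\log p))\rvert\ge p^{c}$, and then finitely many applications of the Product Theorem (Theorem~\ref{ProductThm}) fill out $\SL_d(p)$. This avoids citing spectral-gap results over number fields, which in your formulation strictly requires the later extensions of Salehi Golsefidy rather than the original Salehi Golsefidy--Varj\'u theorem over $\mathbb{Q}$. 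Finally, for the conclusion $w(g)\notin Z(\Gamma)$ the paper uses surjectivity of $\pi_\mathcal{P}$ (central in $\Gamma$ would imply central in $\pi_\mathcal{P}(\Gamma)=\SL_d(p)$) rather than your appeal to Zariski-density; both are valid.
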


The rest of this Section is devoted to the proof 
of Theorem \ref{MainNoFieldThm}, 
and the deduction of 
Theorem \ref{IntroMainThm} from it. 
In what follows, we continue to assume 
that the hypotheses of Theorem \ref{MainNoFieldThm} 
hold. 
Note that since 
$\lvert w\rvert_{S\cup\lbrace x \rbrace}=n$, 
we have 
$\lvert c_i \rvert_S \leq n$ for $1 \leq i \leq k$. 
We may take $n$ to be larger than a quantity 
depending only on $K$ and $d$, 
which will be chosen in the course of the proof. 

For each $s \in S$, 
we may choose 
$s^{\ast} \in \mathbb{M}_d (\mathcal{O}_K)$ 
and $\delta(s) \in \mathcal{O}_K$ such that 
$s = (1/\delta(s)) s^{\ast}$ in $\mathbb{M}_d (K)$. 
We can then extend to the whole of $\Gamma$, 
by choosing, for each $g \in \Gamma$, 
a representative $g = s_1 \cdots s_{\lvert g \rvert_S}$ 
of $g$ as a word in $S$ of minimal length, 
and setting:
\begin{equation}
g^{\ast} 
= s_1 ^{\ast} \cdots s_{\lvert g \rvert_S} ^{\ast}
\text{ and }\delta(g)=
\delta(s_1) \cdots \delta(s_{\lvert g \rvert_S})
\end{equation}
so that for all $g \in \Gamma$, 
$g = (1/\delta(g))g^{\ast}$.  
Let $M$ be the maximal value of $\mu$ over 
the nonzero elements of 
$\lbrace s^{\ast} _{i,j} : s \in S , 1 \leq i,j \leq d \rbrace \cup \lbrace \delta(s) : s \in S \rbrace$. 

\begin{lem} \label{HeightBoundLem}
For any $g \in \Gamma$, 
every entry $g_{i,j} ^{\ast}$ of $g^{\ast}$ satisfies 
$\mu (g_{i,j} ^{\ast}) \leq (dM)^{\lvert g \rvert_S}$, 
and $\mu(\delta(g)) \leq M^{\lvert g \rvert_S}$. 
\end{lem}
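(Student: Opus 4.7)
The plan is to prove both bounds by a straightforward induction on $|g|_S$, using only properties (i) and (ii) of $\mu$ (triangle inequality and sub-multiplicativity).

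The bound $\mu(\delta(g)) \leq M^{|g|_S}$ is essentially immediate: by construction $\delta(g)$ is a product of $|g|_S$ factors of the form $\delta(s_i)$ with $s_i \in S$, each of which satisfies $\mu(\delta(s_i)) \leq M$ by the definition of $M$. Iterating property (ii) yields the claim.

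For the entries of $g^*$, I would induct on $k = |g|_S$. The base cases $k = 0, 1$ are immediate: for $k=0$ all nonzero entries of $g^* = I_d$ equal $1$ and satisfy $\mu(1) = 1 \leq (dM)^0$; for $k=1$ we have $g^* = s^*$ for some $s \in S$, so each entry either vanishes or has $\mu \leq M \leq dM$. For the inductive step, fix the minimal-length representative $g = s_1 \cdots s_k$ used to define $g^*$, and set $h = s_1 \cdots s_{k-1}$, $s = s_k$. One first observes that $h$ is itself a minimal-length representative of its value in $\Gamma$ (otherwise we could shorten $g$), so the induction hypothesis applies to $h$ with the compatible choice of representative, and by construction $g^* = h^* \cdot s^*$. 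Expanding entry-wise,
\[ (g^*)_{i,j} = \sum_{l=1}^d (h^*)_{i,l} \, (s^*)_{l,j}, \]
and combining the induction hypothesis $\mu((h^*)_{i,l}) \leq (dM)^{k-1}$ with $\mu((s^*)_{l,j}) \leq M$ via properties (i) and (ii) gives
\[ \mu\bigl((g^*)_{i,j}\bigr) \leq d \cdot (dM)^{k-1} \cdot M = (dM)^k, \]
as required.

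There is no serious obstacle here; the estimate is routine, with the factor of $d$ in the base $dM$ arising precisely from the sum over $d$ indices in the matrix product. The only point I would double-check is the compatibility of the recursive identity $g^* = h^* \cdot s^*$ with the definition of $g^*$, which depends on a choice of minimal representative; this is exactly what the ``prefixes of minimal words are minimal'' observation resolves.
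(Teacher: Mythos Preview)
Your approach is essentially the same as the paper's: both argue by induction on the length, expanding one matrix factor off the product and bounding the resulting entry-sum by $d \cdot M \cdot (dM)^{k-1} = (dM)^k$. The paper peels from the left rather than the right, but that is immaterial.

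The one subtlety you flag at the end is real but not quite resolved by ``prefixes of minimal words are minimal'': that observation only shows $s_1 \cdots s_{k-1}$ is \emph{some} minimal word for $h$, not that it is the \emph{chosen} representative used to define $h^{\ast}$, so you do not literally have $g^{\ast} = h^{\ast} s^{\ast}$ from the definitions. The paper sidesteps this by formulating the induction hypothesis as ``every entry of any product $s_1^{\ast} \cdots s_r^{\ast}$ has $\mu \leq (dM)^r$'', which makes no reference to the chosen representatives and applies directly to the prefix. Your argument becomes clean with the same strengthening (or, alternatively, by stipulating once that representatives are chosen so that prefixes of chosen words are chosen, e.g.\ via shortlex order).
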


\begin{proof}
We show that $\mu$ takes value at most $(dM)^r$ 
on all entries of products of the form 
$X = s_1 ^{\ast} \cdots s_r ^{\ast}$. 
Writing $Y = s_2 ^{\ast} \cdots s_r ^{\ast}$, 
so that $X = s_1 ^{\ast} Y$, we have: 
\begin{align*}
\mu (X_{i,j}) 
= \sum_{l=1} ^d \mu(s_{i,k} ^{\ast}) \mu(Y_{k,j}) 
\leq M \sum_{l=1} ^d \mu(Y_{k,j}) 
\leq M \sum_{l=1} ^d (dM)^{r-1} 
= (dM)^r
\end{align*}
(by induction on $r$). 
The second claim holds by 
submultiplicativity of $\mu$. 
\end{proof}

If $p$ splits completely over $K$, 
then for any $K$-prime $\mathcal{P} \in \mathcal{O}_K$ 
dividing $p$, reduction modulo $\mathcal{P}$ 
induces a surjective ring homomorphism 
$\pi_{\mathcal{P}} : 
\mathcal{O}_K \rightarrow \mathbb{F}_p$. 
If moreover $\mathcal{P}$ does not divide 
any element of $\lbrace \delta(s) : s\in S \rbrace$, 
then every nonzero entry of every element of $\Gamma$ 
is invertible modulo $\mathcal{P}$, 
and $\pi_{\mathcal{P}}$ induces a group homomorphism 
$\Gamma \rightarrow \SL_d (p)$, 
which we also denote by $\pi_{\mathcal{P}}$. 

For $G$ a group, $T \subseteq G$ 
a generating set and $r \geq 0$ let: 
\begin{equation*}
B_T(r) = \lbrace g \in G : 
\lvert g \rvert_T \leq r \rbrace
\end{equation*}
denote the closed ball of radius $r$ about the identity
in the word-metric induced by $T$ on $G$. 

\begin{lem} \label{InjRadLem}
There exists $\tilde{c}=\tilde{c}(S) > 0$ 
such that the following holds. 
Let $p \in Q_n$ and let $\mathcal{P}\in\mathcal{O}_K$ 
be a $K$-prime dividing $p$. 
Suppose $\mathcal{P}$ does not divide 
any element of $\lbrace \delta(s) : s\in S \rbrace$. 
Then the restriction of $\pi_{\mathcal{P}}$ 
to $B_S (c\log(p))$ is injective. 
\end{lem}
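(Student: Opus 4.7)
The plan is to prove the contrapositive: whenever $g, h \in B_S(r)$ satisfy $\pi_{\mathcal{P}}(g) = \pi_{\mathcal{P}}(h)$ with $g \neq h$, the radius $r$ must already be comparable to $\log(p)$.

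First, I would consider the matrix $A := \delta(h) g^{\ast} - \delta(g) h^{\ast} \in \mathbb{M}_d(\mathcal{O}_K)$. The hypothesis that $\mathcal{P}$ divides none of the elements $\delta(s)$ (for $s \in S$) ensures, by submultiplicativity, that $\delta(g)$ and $\delta(h)$ are both units modulo $\mathcal{P}$. Consequently, the identity $\pi_{\mathcal{P}}(g) = \pi_{\mathcal{P}}(h)$ in $\SL_d(\mathbb{F}_p)$ forces every entry of $A$ to lie in the prime ideal $\mathcal{P}$.

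Next, if $g \neq h$ then $(1/\delta(g)) g^{\ast} \neq (1/\delta(h)) h^{\ast}$ in $\mathbb{M}_d(K)$, so $A \neq 0$. Pick any nonzero entry $\alpha = A_{i,j} \in \mathcal{P} \cap \mathcal{O}_K$. Since $p$ splits completely over $K$, the residue field at $\mathcal{P}$ is $\mathbb{F}_p$, so $N_{K/\mathbb{Q}}(\mathcal{P}) = p$, and therefore $p$ divides the nonzero rational integer $N_{K/\mathbb{Q}}(\alpha)$. Combined with property (iv), this yields $\mu(\alpha)^D \geq p$, so $\mu(\alpha) \geq p^{1/D}$. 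On the other hand, from Lemma \ref{HeightBoundLem} together with properties (i) and (ii) of $\mu$,
$$\mu(A_{i,j}) \leq \mu(\delta(h))\mu(g^{\ast}_{i,j}) + \mu(\delta(g))\mu(h^{\ast}_{i,j}) \leq 2 M^r (dM)^r = 2(dM^2)^r$$
whenever $g, h \in B_S(r)$. Comparing the two bounds gives $p^{1/D} \leq 2(dM^2)^r$, which forces $r \geq \tilde{c}\log(p)$ for a suitable constant $\tilde{c} = \tilde{c}(S) > 0$ (depending on $d$, $D$ and $M$). The contrapositive is precisely the claimed injectivity of $\pi_{\mathcal{P}}|_{B_S(\tilde{c}\log(p))}$.

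The only subtlety in the argument is the bookkeeping with the denominators $\delta(g)$: because these are defined only after choosing a geodesic representative of each $g \in \Gamma$, I must be careful to perform all manipulations inside $\mathbb{M}_d(\mathcal{O}_K)$, where the statement "$\mathcal{P}$ divides $A_{i,j}$" makes sense. Once that is in place, the argument is essentially a pigeonhole comparison between the archimedean height of $A$ and the norm of $\mathcal{P}$, so I do not anticipate a substantive obstacle.
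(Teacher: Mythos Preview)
Your proposal is correct and follows essentially the same height-versus-norm comparison as the paper's proof. The only cosmetic difference is that the paper first reduces to a single element $g = hk^{-1} \in \ker(\pi_{\mathcal{P}})$ and bounds the entries of $g^{\ast} - \delta(g) I_d$, whereas you compare $g$ and $h$ directly via $A = \delta(h) g^{\ast} - \delta(g) h^{\ast}$; the resulting constants differ slightly, but the argument is the same.
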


\begin{proof}
Note that if $h,k \in \Gamma$ satisfy 
$\pi_{\mathcal{P}} (h) = \pi_{\mathcal{P}} (k)$ 
then $g = hk^{-1} \in \ker(\pi_{\mathcal{P}})$ 
and $\lvert g \rvert_S 
\leq \lvert h \rvert_S+ \lvert k \rvert_S$. 
Hence suppose that 
$I_d \neq g \in \ker(\pi_{\mathcal{P}})$. 
Then $0 \neq X = g^{\ast} - \delta(g) I_d 
\in \mathbb{M}_d (\mathcal{O}_K)$ 
has all entries divisible by $\mathcal{P}$. 
Letting $x \in \mathcal{O}_K$ be a nonzero 
entry of $X$, we therefore have 
$\mu(x) \geq p^{1/D}$. 
On the other hand, by Lemma \ref{HeightBoundLem}, 
$\mu(x) \leq (dM)^{\lvert g \rvert_S} + M^{\lvert g \rvert_S} \leq ((d+1)M)^{\lvert g \rvert_S}$, 
and the conclusion follows. 
\end{proof}

\begin{propn} \label{GoodPrimesPropn}
Let $Q_n$ be as in Lemma \ref{ManySplitLem}. 
There exists $p \in Q_n$ and a $K$-prime 
$\mathcal{P} \in \mathcal{O}_K$ dividing $p$, 
such that: 
\begin{itemize}
\item[(i)] $\mathcal{P}$ does not divide 
any element of $\lbrace \delta(s) : s\in S \rbrace$; 

\item[(ii)] $\pi_{\mathcal{P}}:
\Gamma \rightarrow \SL_d (p)$ is surjective; 

\item[(iii)] For all $1 \leq i \leq k$, 
$\pi_{\mathcal{P}} (c_i) \in \SL_d (p)$ 
is non-central. 

\end{itemize}
\end{propn}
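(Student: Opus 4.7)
The plan is to show that the set of rational primes in $Q_n$ failing at least one of (i), (ii), (iii) has total product at most $\exp(C' n^2)$ for some $C' = C'(S,K,d)$, so that choosing $C_0$ in Lemma \ref{ManySplitLem} large enough (equivalently, $c > C'$) forces a surviving prime $p \in Q_n$; for such a $p$, any $\mathcal{P} \mid p$ will satisfy all three conditions.

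For conditions (i) and (ii), note that the finite set $\lbrace \delta(s) : s \in S \rbrace \subset \mathcal{O}_K \setminus \lbrace 0 \rbrace$ has only finitely many prime divisors, giving only finitely many exceptional primes independent of $n$. For (ii), apply an appropriate Strong Approximation theorem: when $d=2$ this is classical for any Zariski-dense subgroup of $\SL_2(K)$; when $d \geq 3$, the trace-field hypothesis (\ref{TraceEq}) is precisely what is needed to invoke the Weisfeiler--Nori--Matthews--Vaserstein form of Strong Approximation, which guarantees $\pi_{\mathcal{P}}(\Gamma) = \SL_d(\mathbb{F}_p)$ for all but finitely many $\mathcal{P}$. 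Together these contribute only $O(1)$ exceptional rational primes, which are absorbed trivially.

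For condition (iii), since $c_i \notin Z(\Gamma)$, the element $c_i \in \SL_d(K)$ is not a scalar matrix; equivalently, in the integral representative $c_i^{\ast} = \delta(c_i) c_i \in \mathbb{M}_d(\mathcal{O}_K)$, some off-diagonal entry is nonzero or some difference of two diagonal entries is nonzero. Pick any such nonzero witness $x_i \in \mathcal{O}_K$. By Lemma \ref{HeightBoundLem} and properties (i), (ii) of $\mu$, $\mu(x_i) \leq 2(dM)^{\lvert c_i \rvert_S} \leq (2dM)^n$, so $N_{K/\mathbb{Q}}(x_i) \leq \mu(x_i)^D \leq (2dM)^{nD}$. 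If $\pi_{\mathcal{P}}(c_i)$ is central in $\SL_d(\mathbb{F}_p)$ then $\mathcal{P} \mid x_i$, hence $p = N_{K/\mathbb{Q}}(\mathcal{P})$ divides $N_{K/\mathbb{Q}}(x_i)$. Call $p \in Q_n$ \emph{bad for (iii)} if some $\mathcal{P} \mid p$ divides some $x_i$; each bad $p$ divides $\prod_{i=1}^k N_{K/\mathbb{Q}}(x_i)$, so
\[
\prod_{p \text{ bad for (iii)}} p \leq \prod_{i=1}^{k} N_{K/\mathbb{Q}}(x_i) \leq (2dM)^{nDk} \leq \exp(C' n^2)
\]
for some $C' = C'(S,K,d)$, using $k \leq n$. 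Choosing $c > C'$ in Lemma \ref{ManySplitLem} and combining with the finite exceptions from (i), (ii), some $p \in Q_n$ survives; any $\mathcal{P} \mid p$ satisfies (i), (ii), (iii). The main obstacle is verifying that Strong Approximation supplies condition (ii) in the required form under (\ref{TraceEq}); the height/norm bookkeeping for (iii) is routine given Lemma \ref{HeightBoundLem}.
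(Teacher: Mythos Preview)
Your proposal is correct and follows essentially the same strategy as the paper: handle (i) and (ii) by Strong Approximation (finitely many exceptions), bound the product of primes failing (iii) by $\exp(C' n^2)$ via norms of integral witnesses, and invoke Lemma~\ref{ManySplitLem} with $c>C'$. The only noteworthy difference is the choice of witness for condition (iii). You take a single nonzero off-diagonal entry or diagonal difference of $c_i^{\ast}$ itself, whereas the paper instead picks, for each $i$, a generator $s\in S$ with $sc_i\neq c_i s$ and uses the nonzero entries of the commutator matrix $s^{\ast}c_i^{\ast}-c_i^{\ast}s^{\ast}$; it then takes the product $z$ of all such entries and bounds $\mu(z)^D$. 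Your version is a bit cleaner arithmetically, but note that the implication ``$c_i\notin Z(\Gamma)\Rightarrow c_i$ is not a scalar matrix'' silently uses the Zariski density of $\Gamma$ in $\SL_d(\mathbb{C})$ (if $c_i$ commutes with all of $\Gamma$ it commutes with its Zariski closure, hence is scalar); the paper's commutator witness sidesteps this by working directly from the definition of the centre of $\Gamma$. Either witness yields the same $\exp(O(n^2))$ bound, so the arguments are interchangeable.
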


For the proof we require the following 
well-known auxiliary fact. 

\begin{lem} \label{SL2SubgrpLem}
For every prime $p$ and every proper subgroup 
$H < \SL_2 (p)$, either $H$ is metabelian 
or $\lvert H \rvert \mid 240$. 
\end{lem}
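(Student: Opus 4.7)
The plan is to reduce to Dickson's classical classification of the subgroups of $\PSL_2(p)$, and then carry out a short case analysis. For $p \in \{2,3\}$ the order of $\SL_2(p)$ is $6$ or $24$, both dividing $240$, so the claim is vacuous. Henceforth assume $p \geq 5$; since $\SL_2(p)$ is perfect in this range (its only abelian quotient would have to factor through the nonabelian simple group $\PSL_2(p)$), a proper subgroup $H < \SL_2(p)$ has proper image $\overline{H} < \PSL_2(p)$.

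By Dickson's theorem, any proper subgroup of $\PSL_2(p)$ is of one of the following four types: (a) cyclic; (b) dihedral; (c) contained in the image $\overline{B} \leq \PSL_2(p)$ of the upper-triangular Borel subgroup $B \leq \SL_2(p)$; or (d) isomorphic to $A_4$, $S_4$, or $A_5$. In case (d), the preimage of $\overline{H}$ in $\SL_2(p)$ has order at most $120$, so $|H| \in \{12,24,48,60,120\}$, each of which divides $240$, and we are done.

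In cases (a)--(c), I argue that $H$ itself is metabelian. The common point is that the derived subgroup $\overline{H}'$ is cyclic: this is trivial in (a); it is the rotation subgroup in (b); and in (c) it follows from the inclusion $\overline{H}' \leq (\overline{B})' = U$, the unipotent radical of $B$, which is cyclic of order $p$. Since $\ker(H \twoheadrightarrow \overline{H}) \leq Z(\SL_2(p)) = \{\pm I\}$ is central in $H$, the restriction $H' \twoheadrightarrow \overline{H}'$ has central kernel of order at most $2$. But a central extension of a cyclic group by a group of order at most $2$ is abelian: $H'$ is generated by a lift of a generator of $\overline{H}'$ together with a central subgroup, and these commute. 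Hence $H'$ is abelian, i.e., $H$ is metabelian.

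The only substantive ingredient is Dickson's theorem; the subsequent case-checking is routine bookkeeping, and I do not anticipate any serious obstacle.
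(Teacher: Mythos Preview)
Your proof is correct. The paper itself does not prove this lemma at all: it is introduced as a ``well-known auxiliary fact'' and then immediately used in the proof of Proposition~\ref{GoodPrimesPropn}, with no argument supplied. So there is nothing to compare against beyond noting that your route via Dickson's classification of subgroups of $\PSL_2(p)$ is exactly the standard one underlying such statements.

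A couple of minor remarks on exposition. Your deduction that a proper $H<\SL_2(p)$ has proper image in $\PSL_2(p)$ is correct but slightly compressed: spelling it out, if $\overline{H}=\PSL_2(p)$ then $H$ has index at most $2$; index $1$ is excluded by properness, and index $2$ would give a nontrivial abelian quotient of $\SL_2(p)$, contradicting perfectness. In case~(c), it may be worth remarking that a subgroup $\overline{H}\leq\overline{B}$ satisfies $\overline{H}'\leq\overline{B}'$ simply because the derived subgroup is monotone under inclusion; you use this implicitly. Neither point is a gap, just places where a reader might pause.
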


\begin{proof}[Proof of 
Proposition \ref{GoodPrimesPropn}]
Since $S$ is finite, there are 
only finitely many $K$-primes 
$\mathcal{P} \in \mathcal{O}_K$ for which (i) fails. 
We claim that there are only finitely many 
$\mathcal{P}$ satisfying (i) but not (ii). 
For $d \geq 3$, Theorem 10.5 of \cite{Weis} applies 
(by (\ref{TraceEq})). 
The case $d=2$ is described in 
\cite{LoLuRe} Section 3; 
we outline the argument here for the reader's 
convenience. 
By the Tits alternative, 
there exist $a , b \in \Gamma$ 
freely generating a nonabelian free subgroup 
of $\Gamma$. 
Set $g = [[a,b],[b^{-1},a]]^{240} 
\in \Gamma$ (so that $g$ is nontrivial). 
If $\pi_{\mathcal{P}}$ is not 
surjective, then by Lemma \ref{SL2SubgrpLem},  
$\pi_{\mathcal{P}} (g) = I_2$. 
This contradicts Lemma \ref{InjRadLem} 
for $n$ sufficiently large (in terms of $S$). 

Let $B$ be the set of all $K$-primes 
for which either (i) or (ii) fails, 
so that $\lvert B \rvert$ is bounded in 
terms of $S$ alone. 
Suppose for a contradiction that for all 
$p \in Q_n \setminus B$ 
and all $K$-primes 
$\mathcal{P} \in \mathcal{O}_K$ dividing $p$, 
there exists $1 \leq i \leq k$ such that 
$\pi_{\mathcal{P}} (c_i) \in Z(\SL_d (p))$. 
Thus for all $s \in S$, 
\begin{center}
$\pi_{\mathcal{P}} (s)\pi_{\mathcal{P}} (c_i)-\pi_{\mathcal{P}} (c_i)\pi_{\mathcal{P}} (s) = 0$ 
in $\mathbb{M}_d (p)$. 
\end{center}
By contrast, since all the $c_i$ are non-central 
in $\Gamma$ and $S$ generates $\Gamma$, 
we have for all $1 \leq i \leq k$ 
that there exists $s \in S$ such that 
$0 \neq sc_i - c_i s \in \mathbb{M}_d (K)$. 
Multiplying through by $\delta(c_i)\delta(s)$, 
we have $0 \neq s^{\ast}c_i ^{\ast}-c_i ^{\ast}s^{\ast} 
\in\mathbb{M}_d (\mathcal{O}_K)$, 
but all entries of $s^{\ast}c_i ^{\ast}-c_i ^{\ast}s^{\ast}$ are divisible by $\mathcal{P}$. 

Thus, taking the product of all nonzero 
entries of all matrices 
$s^{\ast}c_i ^{\ast}-c_i ^{\ast}s^{\ast}$ 
we obtain an element $z \in \mathcal{O}_K$ 
which is divisible by all of the $\mathcal{P}$. 
Thus, as observed above 
$p|N_{K/\mathbb{Q}}(\mathcal{P})|N_{K/\mathbb{Q}}(z)$ 
for each $p \in Q_n \setminus B$, so that: 
\begin{equation} \label{CentralLowerBd}
\frac{1}{C_0 ^{\lvert B \rvert}} 
\exp \big( cn^2 - 2 \lvert B \rvert \log(n) \big)
=\exp(cn^2) / (C_0 n^2)^{\lvert B \rvert} \leq \prod_{Q_n \setminus B} p 
\leq N_{K/\mathbb{Q}}(z)
\leq \mu(z)^D
\end{equation}
by Lemma \ref{ManySplitLem}. 
By contrast, by Lemma \ref{HeightBoundLem}, 
$\mu$ takes value at most $2 (dM)^{n+1}$ 
on the entries of 
$s^{\ast}c_i ^{\ast}-c_i ^{\ast}s^{\ast}$. 
Since $k \leq n$, 
there are at most 
$d^2 n$ nonzero elements of $\mathcal{O}_K$ 
which are entries of one of the 
$s^{\ast}c_i ^{\ast}-c_i ^{\ast}s^{\ast}$, 
and as such: 
\begin{equation}
\mu(z) \leq 2^{d^2 n} (dM)^{d^2 n(n+1)} 
\leq \exp\big( d^2(2 \log(dM) + \log(2)/n)n^2 \big)
\end{equation}
Taking $c$ sufficiently large in 
Lemma \ref{ManySplitLem} 
(which in turn requires 
$C_0$ to be chosen sufficiently large), 
this contradicts (\ref{CentralLowerBd}) 
for $n$ sufficiently large (in terms of $S$). 
\end{proof}

If $G$ is a finite group, 
recall that the \emph{diameter} of $G$ 
with respect to $T$ is: 
\begin{equation*}
\diam(G,T)=\min \lbrace r\geq 0 : B_T(r)=G \rbrace. 
\end{equation*}

\begin{propn} \label{LogDiamProp}
There exists $C=C(S)>0$ such that 
for all but finitely many rational primes $p$ 
which completely split in $K$, 
for all primes $\mathcal{P}$ of $\mathcal{O}_K$ 
which divide $p$, 
$\pi_{\mathcal{P}} (S)$ generates 
$\SL_d (p)$ and 
$\diam (\SL_d (p),\pi_{\mathcal{P}}(S))\leq C\log (p)$. 
\end{propn}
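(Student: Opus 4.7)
The plan is to split the statement into its two assertions — generation of $\SL_d(p)$ by $\pi_{\mathcal{P}}(S)$, and the diameter bound — and handle each in turn. Generation is essentially already in hand: the argument proving Proposition \ref{GoodPrimesPropn}(ii) (via \cite{Weis} Theorem 10.5 when $d\geq 3$, using hypothesis (\ref{TraceEq}), and via Lemma \ref{SL2SubgrpLem} combined with a free subgroup of $\Gamma$ when $d=2$) shows that $\pi_{\mathcal{P}}\colon\Gamma\twoheadrightarrow\SL_d(p)$ for all but finitely many $K$-primes $\mathcal{P}$, whence a fortiori $\pi_{\mathcal{P}}(S)$ generates $\SL_d(p)$.

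For the diameter bound, I would invoke a Bourgain--Gamburd-type expansion theorem. Specifically, I want to assert that there exists $\varepsilon = \varepsilon(S,K)>0$ such that, for all but finitely many rational primes $p$ splitting completely in $K$ and all $K$-primes $\mathcal{P}\mid p$, the second-largest normalised adjacency eigenvalue of $\Cay(\SL_d(p),\pi_{\mathcal{P}}(S))$ is at most $1-\varepsilon$. For $d=2$ with $\Gamma\leq\SL_2(\mathbb{Z})$ this is the original Bourgain--Gamburd theorem; the extension to arbitrary $d$ and to $\mathcal{O}_K$-coefficients is due to Bourgain--Varj\'u and, in the generality we need, Salehi Golsefidy--Varj\'u, the essential hypothesis being precisely the Zariski-density of $\Gamma$ in $\SL_d(\mathbb{C})$ that we have assumed. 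A uniform spectral gap then yields a logarithmic diameter bound by a standard argument: for $r\gg \log\lvert G\rvert/\varepsilon$ the $r$-step lazy random walk on a Cayley graph of $G$ with spectral gap $\varepsilon$ has $\ell^\infty$-distance from uniform smaller than $1/\lvert G\rvert$, so puts positive mass on every element, giving $\diam(G,\pi_{\mathcal{P}}(S))\ll \log\lvert G\rvert$. Since $\lvert\SL_d(p)\rvert\asymp p^{d^2-1}$, this is the desired bound.

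The one delicate point I anticipate is the translation between reduction modulo a $K$-prime $\mathcal{P}$ and the rational-integer formulations of the expansion theorems that usually appear in the literature. Since $p$ splits completely in $K$, the choice of $\mathcal{P}\mid p$ determines a local embedding $\mathcal{O}_K\hookrightarrow\mathbb{Z}_p$ sending $\mathcal{P}$ into $p\mathbb{Z}_p$, under which $\Gamma$ maps to a Zariski-dense subgroup of $\SL_d(\mathbb{Z}_p)$ and reduction modulo $\mathcal{P}$ becomes ordinary mod-$p$ reduction; this unpacking brings the situation directly within the scope of the cited expansion results, and is all that should be needed beyond citation.
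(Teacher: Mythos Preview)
Your proposal is correct but takes a genuinely different route from the paper for the diameter bound. Both of you handle generation the same way. For diameter, you invoke the full Bourgain--Gamburd / Salehi Golsefidy--Varj\'u spectral-gap machinery and then pass from expansion to logarithmic diameter. The paper instead argues directly from the Product Theorem (Theorem~\ref{ProductThm}) together with ingredients already in hand: exponential word-growth of $\Gamma$ (via the Tits alternative) combined with the injectivity-radius Lemma~\ref{InjRadLem} gives $\lvert \pi_{\mathcal{P}}(B_S(\tilde{c}\log p))\rvert \geq p^c$; iterating triple products $A_{m+1}=A_mA_mA_m$ and applying Theorem~\ref{ProductThm} then forces $A_m=\SL_d(p)$ after $O(1)$ steps, so the whole group lies in a ball of radius $3^{O(1)}\tilde{c}\log p$. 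Your approach is quicker to state but imports strictly more: the expansion theorems you cite themselves rest on the Product Theorem plus substantial additional input ($\ell^2$-flattening, quasirandomness, escape from subvarieties). The paper's approach buys a proof that stays within the tools already assembled in the paper and requires no appeal to expansion whatsoever --- indeed, the argument shows that for the purpose of logarithmic diameter, the Product Theorem alone (plus large injectivity radius) suffices, and the spectral gap is overkill.
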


The proof rests on the following deep result, 
due to Helfgott \cite{Helf}, 
Breuillard-Green-Tao \cite{BGT} 
and Pyber-Szab\'{o} \cite{PySz}. 

\begin{thm} \label{ProductThm}
There exists $\epsilon = \epsilon(d)>0$ such that 
for any prime $p$ and any generating set 
$A \subseteq \SL_d (p)$, 
either: (a) $AAA = \PSL_d (p)$ or 
(b) $\lvert AAA\rvert\geq\lvert A\rvert^{1+\epsilon}$. 
\end{thm}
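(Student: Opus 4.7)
The plan is to argue by contradiction: suppose $A \subseteq \SL_d(p)$ generates the group (so that alternative (a) fails, at least at the level of $A^3$) and that $|AAA| < |A|^{1+\epsilon}$ for some $\epsilon = \epsilon(d) > 0$ to be chosen small; the goal is to derive $A^3 = \PSL_d(p)$. The opening move is to upgrade small tripling to small higher powers via the non-commutative Pl\"unnecke--Ruzsa inequality of Tao, which gives $|A^k| \leq |A|^{1+O_k(\epsilon)}$ for every fixed $k$. Consequently $A$ can be handled throughout as a $K$-approximate subgroup of $\SL_d(p)$ with $K = |A|^{O(\epsilon)}$, and all subsequent constructions may be carried out inside bounded powers $A^{O(1)}$ without losing quantitative control.

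The geometric input is escape from subvarieties (Eskin--Mozes--Oh). Because $A$ generates $\SL_d(p)$, for every proper subvariety $V \subset \SL_d$ of degree bounded in terms of $d$ there is some $k_0 = k_0(d)$ with $A^{k_0} \not\subseteq V(\mathbb{F}_p)$. Applied to the non-regular-semisimple locus, this yields a regular semisimple $g \in A^{k_0}$ whose centralizer $T$ is a maximal torus; applied to stabilizers of flags and to proper algebraic subgroups, it supplies additional elements of $A^{O(1)}$ in sufficiently generic position. A counting and pigeonhole argument then forces the intersection $A^{O(1)} \cap T$ to have size at least a power of $|A|^{1/\dim G}$, and this intersection inherits small tripling inside $T$.

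The climax is a sum--product argument that promotes torus structure into global growth. In coordinates, $T$ looks (over $\overline{\mathbb{F}_p}$) like a product of copies of $\mathbb{F}_p^*$, while $A^{O(1)}$ also contains elements $u$ that do not normalize $T$, so that the map $t \mapsto [u,t]$ mixes the multiplicative structure of $T$ with an additive one coming from the Lie algebra. Combining these via the Bourgain--Glibichuk--Konyagin sum--product estimate (or, for $d \geq 3$, a higher-dimensional variant over a product of residue fields) forces a product set such as $A \cdot T \cdot A$ to grow by a power of $|A|$, contradicting the tripling hypothesis --- unless $|A|$ is already comparable to $|\SL_d(p)|$, in which case a direct covering argument closes the gap and yields $A^3 = \PSL_d(p)$.

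The main obstacle, and the technical heart of the proofs of Helfgott, Breuillard--Green--Tao, and Pyber--Szab\'o, is the \emph{pivoting} step linking different maximal tori: one must transfer the additive information extracted from one torus (through conjugation by a non-normalizing element of $A^{O(1)}$) to the multiplicative structure of another, while ensuring that the loss in tripling constants remains polynomial in $K$. It is at this step that the dimension-dependent constant $\epsilon(d)$ is actually produced, and controlling its dependence on $d$ uniformly in $p$ is the principal source of difficulty.
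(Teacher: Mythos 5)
The paper does not prove Theorem~\ref{ProductThm} at all: it is stated as a black-box citation to Helfgott~\cite{Helf}, Breuillard--Green--Tao~\cite{BGT}, and Pyber--Szab\'o~\cite{PySz}, and then invoked directly in the proof of Proposition~\ref{LogDiamProp}. So there is no proof in the paper to compare your attempt against; the relevant question is only whether your outline accurately reflects the arguments in those references.

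Taken as a high-level roadmap, your sketch captures the main ingredients for $d=2$ reasonably well: Pl\"unnecke--Ruzsa to pass from tripling to bounded powers, escape from subvarieties to find regular semisimple elements, non-concentration on tori, and a sum--product / pivoting argument. But it stops well short of a proof at exactly the point you flag yourself (``the technical heart\ldots is the pivoting step''): the pivoting argument and the torus non-concentration estimates are described rather than executed, and those are precisely the parts that carry the content. For general $d\geq 3$ the sketch is also somewhat misleading in emphasis --- the Breuillard--Green--Tao and Pyber--Szab\'o proofs do not run through a direct sum--product estimate in the way Helfgott's $\SL_2$ argument does, but rather through Larsen--Pink-type dimension inequalities comparing $|A^{O(1)}\cap V|$ to $|A|^{\dim V/\dim G}$ for subvarieties $V$, which is a genuinely different (and more robust) mechanism. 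So: this is a serviceable survey paragraph, not a proof, and the paper treats the statement in exactly that spirit.

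One unrelated point worth flagging: as stated in the paper, alternative~(a) reads $AAA=\PSL_d(p)$, which cannot hold since $A\subseteq\SL_d(p)$ implies $AAA\subseteq\SL_d(p)$. This is a typo for $AAA=\SL_d(p)$, which is how the theorem is actually used in the proof of Proposition~\ref{LogDiamProp} (where the conclusion drawn is $\SL_d(p)\subseteq A_{m+1}$).
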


Here $AAA = \lbrace abc : a,b,c \in A \rbrace$. 
We also require a Lemma, 
the proof of which is similar to that 
of Proposition \ref{LogDiamProp}. 

\begin{proof}[Proof of Proposition \ref{LogDiamProp}]
First, as in Proposition \ref{GoodPrimesPropn}, 
we may assume that 
$\mathcal{P}$ does not divide 
any element of $\lbrace \delta(s) : s\in S \rbrace$ 
and that $\pi_{\mathcal{P}}(S)$ generates 
$\SL_2 (p)$. 
Next, $\Gamma$ has exponential word growth, 
that is, there exists $\delta=\delta(S)>0$ 
such that for all $r > 0$, 
$\lvert B_S (r) \rvert \geq (1+\delta)^r$ 
(this follows, for instance, from the Tits 
alternative, since $\Gamma$ has a nonabelian 
free subgroup). 
Let $\tilde{c}$ be as in Lemma \ref{InjRadLem} 
and let: 
\begin{center}
$A_0 =\pi_{\mathcal{P}}(B_S(\tilde{c}\log(p)))=B_{\pi_{\mathcal{P}} (S)}(\tilde{c}\log(p))$, 
$A_{m+1} = A_m A_m A_m$, 
\end{center}
so that for all $m$, 
$A_m \subseteq B_{\pi_{\mathcal{P}} (S)}(3^m \tilde{c}\log(p))$. 
Then by Lemma \ref{InjRadLem}, 
$\lvert A_0 \rvert \geq p^c$ for 
some $c=c(S)>0$. 
If none of $A_0 , A_1 , \ldots , A_m$ satisfy 
conclusion (a) of Theorem \ref{ProductThm}, 
then $\lvert A_m \rvert \geq p^{c (1+\epsilon)^m}$. 
This is impossible for $m \geq \tilde{C} = (2\log(d)-\log(c))/\log(1+\epsilon)$, 
since $\lvert \SL_d (p) \rvert \leq p^{d^2}$. 
Thus for some $m \leq \tilde{C}$, 
$\SL_d (p) \subseteq A_{m+1} \subseteq B_{\pi_{\mathcal{P}} (S)}(3^{\tilde{C}}\tilde{c}\log(p))$. 
Thus we have the desired bound, 
with $C = 3^{\tilde{C}}\tilde{c}$. 
\end{proof}

\begin{proof}[Proof of Theorem \ref{MainNoFieldThm}]
Let $p \in Q_n$ and $\mathcal{P}$ 
be as in Proposition \ref{GoodPrimesPropn}. 
Let $w(x) \in \Gamma \ast \mathbb{Z}$ 
be as in (\ref{worddefneqn}). 
For $1 \leq i \leq k$ let 
$\overline{c}_i \in\PSL_d(p)$ be the image 
in $\PSL_d(p)$ of $\pi_{\mathcal{P}}(c_i)\in\SL_d(p)$. 
By Proposition \ref{GoodPrimesPropn}, 
all $\overline{c}_i$ are nontrivial, 
so that: 
\begin{center}
$\overline{w}(x) 
= x^{a_0} \overline{c}_1 x^{a_1} 
\cdots \overline{c}_k x^{a_k} 
\in \PSL_d(p) \ast \mathbb{Z}$
\end{center}
is nontrivial. 
Choosing $C_0$ sufficiently large 
in the definition of $Q_n$, as we may, 
Theorem \ref{PSLThmBST} applies to $\overline{w}(x)$, 
so there exists $\overline{g} \in \PSL_d(p)$ 
such that $\overline{w}(\overline{g})\neq e$. 
Let $g \in \SL_d (p)$ be a lift of $\overline{g}$. 
Then noting that 
$\pi_{\mathcal{P}}(B_S(r)) 
= B_{\pi_{\mathcal{P}}(S)}(r)$, 
and applying Proposition \ref{LogDiamProp}, 
there exists $\tilde{g} \in B_S (C\log(p))$ 
such that $\pi_{\mathcal{P}}(\tilde{g}) = g$. 
Since $\pi_{\mathcal{P}}$ is surjective 
and $\pi_{\mathcal{P}} (w(\tilde{g}))$ 
is not central in $\SL_d(p)$, 
$w(\tilde{g})$ is not central in $\Gamma$. 
Finally, since $p \in Q_n$, 
$\log(p) = O(\log (n))$, as desired. 
\end{proof}

\begin{proof}[Proof of Theorem \ref{IntroMainThm}]
Let $\Gamma$ be as in the statement of 
the Theorem, and let $S$ be a finite generating 
set for $\Gamma$. 
Let $\iota : \SL_d(\mathbb{C}) 
\rightarrow \PGL_d(\mathbb{C})$ 
be the standard isogeny. 
Let $\hat{\Gamma} = \iota^{-1} (\Gamma)$ and 
$\hat{S} = \iota^{-1} (S)$. 
We note first that $\hat{\Gamma}$ 
is Zariski-dense in $\SL_d(\mathbb{C})$. 
For let $\mathbb{G} \leq \SL_d(\mathbb{C})$ 
be the Zariski 
closure of $\hat{\Gamma}$ in $\SL_d(\mathbb{C})$. 
Since $\iota$ is a morphism of algebraic groups, 
$\iota (\mathbb{G})$ is a closed subgroup 
of $\PGL_d(\mathbb{C})$ containing $\Gamma$, 
hence equals $\PGL_d(\mathbb{C})$. 
Thus $\dim(\mathbb{G}) \geq \dim (\iota (\mathbb{G})) 
= d^2 - 1$, and since 
$\SL_d(\mathbb{C})$ is a connected group 
of dimension $d^2 - 1$, we have 
$\mathbb{G} = \SL_d(\mathbb{C})$, as desired. 

Second, the adjoint representation of 
$\SL_d (\mathbb{C})$ factors through $\iota$. 
Thus, by condition (\ref{TraceEq}), 
$\hat{\Gamma}$ satisfies the hypotheses of 
Theorem \ref{MainNoFieldThm}. 
Let: 
\begin{center}
$w(x) = x^{a_0} c_1 x^{a_1} \cdots c_k x^{a_k} 
\in \Gamma \ast \mathbb{Z}$ 
\end{center}
with $\lvert w \rvert_{S \cup \lbrace x \rbrace}=n$ 
and $a_i \neq 0$ for $1 \leq i \leq k-1$. 
We can lift $w(x)$ to 
$\hat{w}(x) \in \hat{\Gamma} \ast \mathbb{Z}$ 
by replacing each $c_i$ by 
$\hat{c}_i \in \hat{\Gamma}$ satisfying 
$\iota(\hat{c}_i) = c_i$ and 
$\lvert \hat{c}_i\rvert_{\hat{S}}=\lvert c_i\rvert_S$. 
Then all $\hat{c}_i$ lie in 
$\hat{\Gamma} \setminus Z(\hat{\Gamma})$ and 
$\lvert\hat{w}(x)\rvert_{\hat{S}\cup\lbrace x\rbrace}
=n$. Applying Theorem \ref{MainNoFieldThm} 
(with $\hat{\Gamma}$; $\hat{S}$ and $\hat{w}(x)$ 
playing the r\^{o}les of $\Gamma$; $S$ and $w(x)$, 
we obtain $\hat{g} \in \hat{\Gamma}$ 
satisfying $\lvert \hat{g} \rvert_{\hat{S}} \leq C\log(n)$ and $\hat{w}(\hat{g})\notin Z(\hat{\Gamma})$. 
Then $g = \iota(\hat{g}) \in \Gamma$ satisfies 
$w(\iota(\hat{g}))= \iota (\hat{w}(\hat{g})) \neq e$ 
and $\lvert g \rvert_S \leq \lvert \hat{g} \rvert_{\hat{S}}$, 
so $g$ witnesses that $\chi_{\Gamma} ^S (w) 
\leq C\log(n)$. 
Since this holds for all such $w$, 
we conclude $\mathcal{M}_{\Gamma} ^S (n)\leq C\log(n)$, as desired. 
\end{proof}

\section{Applications} \label{AppSect}

As a first application of Theorem \ref{IntroMainThm}, 
we return to Example \ref{IntroPLSdEx}, 
and prove that $\PSL_d (\mathbb{Z})$ 
is sharply MIF. 
We shall be applying Theorem \ref{IntroMainThm} 
with $K = \mathbb{Q}$, 
so that hypothesis (\ref{TraceEq}) holds 
automatically.  
Finite generation of $\PSL_d (\mathbb{Z})$ 
is classical. 
Finally, since $\PSL_d (\mathbb{Z})$ is a 
lattice in $\PSL_d (\mathbb{R})$, 
it is Zariski-dense by the Borel Density Theorem. 

\begin{rmrk} \label{PSLdZFIRmrk}
Of course, the preceding argument 
extends to any finitely generated 
subgroup of $\PSL_d (\mathbb{Z})$ 
which is Zariski-dense in $\PSL_d (\mathbb{R})$. 
In particular, any finite-index subgroup of 
$\PSL_d (\mathbb{Z})$ is sharply MIF. 
\end{rmrk}

In the remainder of this Section we deduce 
Theorem \ref{IntroKleinMainThm} 
and Corollaries \ref{IntroFreeGroupsThm} 
and \ref{IntroMfldsCoroll} 
from Theorem \ref{IntroMainThm}. 

\subsection{Kleinian groups}

We refer to \cite{Series} for general background 
on Kleinian group. 
Our exposition in this Subsection is closely 
inspired by the proof of Theorem 1.2 
from \cite{LoLuRe}. 

\begin{thm} \label{KleinMainThm}
Let $\Gamma$ be a finitely generated 
torsion-free nonelementary Kleinian group. 
Then $\Gamma$ is sharply MIF. 
\end{thm}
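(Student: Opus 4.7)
The plan is to deduce Theorem~\ref{KleinMainThm} as a direct application of Theorem~\ref{IntroMainThm} with $d=2$, exploiting the fact that for $d=2$ the trace-field hypothesis~(\ref{TraceEq}) is vacuous. Concretely, I aim to realise $\Gamma$, up to conjugation in $\PSL_2(\mathbb{C})$, as a subgroup of $\PSL_2(K)$ for some number field $K$, and then to verify that $\Gamma$ is Zariski-dense in $\PGL_2(\mathbb{C}) = \PSL_2(\mathbb{C})$. Once both ingredients are in place, Theorem~\ref{IntroMainThm} with $d=2$ delivers the conclusion directly.

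For Zariski-density, I would argue that the proper Zariski-closed subgroups of $\PSL_2(\mathbb{C})$ are, up to finite index, either trivial or stabilisers of a point or of a pair of points on the boundary sphere $\partial\mathbb{H}^3$; hence if the Zariski closure of $\Gamma$ were proper, the discrete group $\Gamma$ would be elementary, contradicting our hypothesis. For the embedding into $\PSL_2(K)$, I would invoke the classical theory of trace fields of finitely generated nonelementary Kleinian groups (see, e.g., the exposition of Maclachlan--Reid): the trace field $k = \mathbb{Q}(\mathrm{tr}(\gamma^2) : \gamma \in \Gamma)$ is an algebraic number field, and $\Gamma$ itself can be conjugated into $\PSL_2(k)$. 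The torsion-freeness of $\Gamma$ conveniently removes the usual sign-of-trace ambiguity by guaranteeing a canonical lift of $\Gamma$ to $\SL_2(\mathbb{C})$. This reduction closely mirrors the argument of \cite{LoLuRe}, which the authors explicitly cite as the template for this subsection.

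The main obstacle is purely expository: correctly assembling the two standard facts about Kleinian groups recalled above, in particular the slightly delicate point that $\Gamma$ itself -- not merely the finite-index subgroup $\Gamma^{(2)}$ generated by squares -- may be placed inside $\PSL_2$ of its trace field. Once this is done, Theorem~\ref{IntroMainThm} does all the heavy lifting and no further argument is required.
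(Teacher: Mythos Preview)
There is a genuine gap in your argument. The assertion that the trace field of an arbitrary finitely generated nonelementary Kleinian group is a number field is false. Maclachlan--Reid establish this only for groups of \emph{finite covolume}, where it follows from Mostow--Prasad rigidity. For infinite-covolume Kleinian groups the traces are not rigid: a rank-two Schottky group, for instance, can have generators whose traces are any pair of complex numbers satisfying an open condition, so the trace field is generically transcendental over $\mathbb{Q}$. Consequently such a $\Gamma$ cannot be \emph{conjugated} into $\PSL_2(K)$ for any number field $K$, and Theorem~\ref{IntroMainThm} does not apply to the given Kleinian embedding.

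The paper circumvents this by observing that sharp MIF is a property of the abstract group, so one is free to replace the given discrete embedding by another faithful representation into $\PSL_2$ of a number field. The proof proceeds in three cases. If $\Gamma$ has finite covolume, Mostow--Prasad rigidity already forces the trace field to be a number field. If $\Gamma$ is geometrically finite of infinite covolume, a theorem of Brooks \cite{Brooks} produces a finite-covolume Kleinian group $\Gamma'$ containing an isomorphic copy of $\Gamma$; one then restricts the number-field representation of $\Gamma'$ to this copy. Finally, if $\Gamma$ is geometrically infinite, deformation theory (see \cite{Ander}) shows it is abstractly isomorphic to a geometrically finite Kleinian group, reducing to the previous case. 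Your Zariski-density argument is fine and matches the paper's use of the ``not virtually soluble'' criterion; it is only the number-field step that needs this extra input.
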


As noted in \cite{Series}, 
examples of torsion-free nonelementary Kleinian group 
include all fundamental groups of 
closed orientable hyperbolic 2- 
and 3-manifolds. 
As such, Corollary \ref{IntroMfldsCoroll} 
follows immediately from Theorem \ref{KleinMainThm}. 

In proving Theorem \ref{KleinMainThm} 
is helpful to recall the following 
well-known characterization of Zariski-density 
in the case of $\SL_2$. 

\begin{thm}
For $\Gamma \leq \SL_2 (\mathbb{C})$, 
the following are equivalent. 
\begin{itemize}
\item[(i)] $\Gamma$ is Zariski-dense in $\SL_2 (\mathbb{C})$; 

\item[(ii)] $\Gamma$ has a nonabelian free subgroup; 

\item[(iii)] $\Gamma$ is not virtually soluble. 

\end{itemize}
\end{thm}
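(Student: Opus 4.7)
The plan is to establish the cycle of implications (ii) $\Rightarrow$ (iii) $\Rightarrow$ (i) $\Rightarrow$ (ii). The first implication is immediate, since a nonabelian free group has subgroups of arbitrarily large finite derived length, and this property passes to any overgroup, precluding virtual solubility.

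For (iii) $\Rightarrow$ (i), I would appeal to the classical classification of proper closed algebraic subgroups of $\SL_2(\mathbb{C})$: up to conjugacy, each such subgroup is either finite (of order bounded by the classification of finite subgroups of $\PSL_2(\mathbb{C})$), contained in the Borel subgroup $B$ of upper-triangular matrices, or contained in the normalizer $N$ of the diagonal torus. All three types are virtually soluble ($B$ is soluble, $N$ is soluble-by-$\mathbb{Z}/2$, and finite groups are trivially virtually soluble). Thus if $\Gamma$ is not Zariski-dense, then its Zariski closure $\overline{\Gamma}$ is virtually soluble, and hence so is the subgroup $\Gamma$ itself.

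For (i) $\Rightarrow$ (ii), the strategy is a ping-pong construction on the Riemann sphere $\mathbb{CP}^1 = \partial \mathbb{H}^3$. First I observe that the non-loxodromic elements of $\SL_2(\mathbb{C})$ are cut out by a proper Zariski-closed condition on the trace, so Zariski-density of $\Gamma$ yields a loxodromic element $a \in \Gamma$, with distinct attracting and repelling fixed points $p_a^+, p_a^- \in \mathbb{CP}^1$. Next, the locus of $g \in \SL_2(\mathbb{C})$ which either is non-loxodromic or shares a fixed point with $a$ is again a proper Zariski-closed subset, so I may choose a loxodromic $b \in \Gamma$ with fixed points $p_b^\pm$ disjoint from those of $a$. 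For $N$ sufficiently large the maps $a^{\pm N}, b^{\pm N}$ contract the complement of small neighbourhoods of their respective repelling fixed points onto small neighbourhoods of their attracting fixed points; by choosing all four neighbourhoods pairwise disjoint, the ping-pong lemma gives that $\langle a^N, b^N \rangle \leq \Gamma$ is free of rank $2$.

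The principal obstacle is the joint selection of the pair $(a,b)$ of loxodromic elements with mutually disjoint fixed-point pairs in (i) $\Rightarrow$ (ii): it is here that Zariski-density plays an essential (and not merely topological) role. The key observation is that this is a generic condition in the Zariski topology on $\SL_2(\mathbb{C}) \times \SL_2(\mathbb{C})$, and Zariski-density of $\Gamma$ is precisely what is needed to meet every such generic condition within $\Gamma$. Once $a$ and $b$ are fixed, the ping-pong step and the choice of $N$ are routine topological dynamics on $\mathbb{CP}^1$.
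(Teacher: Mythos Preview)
The paper does not supply a proof of this statement; it is merely recalled as a ``well-known characterization'' and used as a black box in the proof of Theorem~\ref{KleinMainThm}. So there is no proof in the paper to compare against, and I assess your argument on its own merits.

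Your implications (ii) $\Rightarrow$ (iii) and (iii) $\Rightarrow$ (i) are fine: the classification of proper closed subgroups of $\SL_2(\mathbb{C})$ does exactly what you say.

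Your argument for (i) $\Rightarrow$ (ii), however, has a genuine gap. The claim that ``the non-loxodromic elements of $\SL_2(\mathbb{C})$ are cut out by a proper Zariski-closed condition on the trace'' is false. An element is loxodromic precisely when its trace lies outside the real interval $[-2,2]$, and $[-2,2]\subset\mathbb{C}$ is not Zariski-closed (the only Zariski-closed subsets of the affine line are finite). Concretely, take $\Gamma$ to be any subgroup of $\SU(2)$ which is dense in the analytic topology (two generic rotations suffice). Then $\Gamma$ is Zariski-dense in $\SL_2(\mathbb{C})$, since $\SU(2)$ is a real form, yet every element of $\Gamma$ is elliptic: there are no loxodromic elements at all, and your ping-pong on $\mathbb{CP}^1$ cannot even begin. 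Replacing ``loxodromic'' by the genuinely Zariski-open condition $\tr^2\neq 4$ does not help, because elliptic elements with distinct fixed points have no attracting/repelling dynamics on $\mathbb{CP}^1$.

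The standard remedy is simply to invoke the Tits alternative (over a field of characteristic zero, any linear group which is not virtually soluble contains a nonabelian free subgroup), which gives (iii) $\Rightarrow$ (ii) directly and closes the cycle. If you want a self-contained ping-pong proof you must argue differently in the compact case --- for instance along the lines of the classical Hausdorff construction of free subgroups of $\SO(3)$ --- rather than through dynamics on the boundary.
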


\begin{proof}[Proof of Theorem \ref{KleinMainThm}]
Suppose first that $\Gamma$ has finite covolume. 
Let $\tilde{\Gamma}$ be the preimage of $\Gamma$ 
in $\SL_2 (\mathbb{C})$. 
By Mostow-Prasad rigidity, 
up to conjugacy, we can assume 
that $\tilde{\Gamma} \leq \SL_2 (K)$ 
for some number field $K$. 
Moreover $\tilde{\Gamma}$ 
is not virtually soluble, 
so is Zariski-dense in $\SL_2 (\mathbb{C})$. 
Thus Theorem \ref{MainNoFieldThm} 
applies to $\tilde{\Gamma}$, 
and the conclusion for $\Gamma$ 
follows immediately. 

If $\Gamma$ has infinite covolume but is 
geometrically finite, 
then there exists a finite-covolume 
Kleinian group $\Gamma'$ such that 
$\Gamma$ is isomorphic to a subgroup of $\Gamma'$, 
by \cite{Brooks}. 
Then the faithful representation of $\tilde{\Gamma}'$ 
into $\SL_2(K)$ 
described in the previous paragraph 
restricts to $\tilde{\Gamma}$. 
Once again $\tilde{\Gamma}$ is not virtually soluble, 
so is Zariski dense in $\SL_2 (K)$, 
and Theorem \ref{MainNoFieldThm} applies. 

Finally if $\Gamma$ is geometrically infinite 
then it is isomorphic to a 
geometrically finite Kleinian group 
(see \cite{Ander}). 
\end{proof}

\subsection{Free groups}

\begin{thm} \label{FreeGroupsThm}
Every free group of finite rank $r \geq 2$ 
is sharply MIF. 
\end{thm}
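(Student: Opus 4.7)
The plan is to reduce to Theorem \ref{KleinMainThm} by exhibiting $F_r$ as a (finitely generated, torsion-free, nonelementary) Kleinian group. Any free group of rank $r \geq 2$ contains $F_r$-many Schottky generators of $\PSL_2(\mathbb{R}) \leq \PSL_2(\mathbb{C})$: taking $r$ mutually disjoint pairs of closed round disks in $\hat{\mathbb{C}}$ and choosing loxodromic Möbius transformations pairing them gives, by the ping-pong lemma, a faithful discrete embedding of $F_r$ into $\PSL_2(\mathbb{C})$. The image is torsion-free (being isomorphic to $F_r$) and nonelementary (it contains a nonabelian free subgroup, namely itself, so it is not virtually soluble). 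Thus $F_r$ is a finitely generated torsion-free nonelementary Kleinian group, and Theorem \ref{KleinMainThm} applies.

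Alternatively, one could bypass Theorem \ref{KleinMainThm} and invoke Theorem \ref{MainNoFieldThm} directly with $K=\mathbb{Q}$ and $d=2$: the Sanov embedding
\begin{equation*}
a \mapsto \begin{pmatrix} 1 & 2 \\ 0 & 1 \end{pmatrix}, \quad b \mapsto \begin{pmatrix} 1 & 0 \\ 2 & 1 \end{pmatrix}
\end{equation*}
realizes $F_2 \hookrightarrow \SL_2(\mathbb{Z})$, and every $F_r$ with $2 \leq r < \infty$ embeds in $F_2$; the image is Zariski-dense in $\SL_2(\mathbb{C})$ by the characterization theorem stated just before (it contains a nonabelian free subgroup), and condition (\ref{TraceEq}) is vacuous for $d=2$.

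Two small points need to be checked to match the hypotheses of Theorem \ref{MainNoFieldThm}. First, since $Z(F_r) = \{e\}$ for $r \geq 2$, the centrality conditions on the constants $c_i$ in (\ref{worddefneqn}) and on the witness $w(g)$ reduce to non-triviality, which is exactly what is needed to conclude $w(g) \neq e$ in $F_r$. Second, an embedding $\phi\colon F_r \hookrightarrow \SL_2(K)$ carries a free basis $S$ of $F_r$ to a generating set $\phi(S)$, and the word-length in $F_r$ with respect to $S$ agrees with the word-length of $\phi(g)$ with respect to $\phi(S)$ on the image, so the bound $\lvert g \rvert_S \leq C \log(n)$ produced by Theorem \ref{MainNoFieldThm} pulls back to $F_r$ without change.

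There is essentially no obstacle here; the entire content is already inside Theorem \ref{KleinMainThm} (or Theorem \ref{MainNoFieldThm}), and the only work is the elementary verification that $F_r$ sits inside the relevant class. The mildest subtlety is to ensure nonelementariness of the Kleinian representation, which is why we insist on $r \geq 2$ and use a ping-pong/Schottky construction rather than an arbitrary representation.
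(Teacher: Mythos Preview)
Your proposal is correct and follows essentially the same two routes as the paper: first reducing to Theorem \ref{KleinMainThm} by noting that $F_r$ is a torsion-free nonelementary Kleinian group, and alternatively using the Sanov embedding into $\SL_2(\mathbb{Z})$. The only cosmetic difference is that in the second route the paper passes to $\PSL_2(\mathbb{Z})$ and invokes Remark \ref{PSLdZFIRmrk} (using finite index and Borel density for Zariski-density), whereas you stay in $\SL_2(\mathbb{Z})$ and apply Theorem \ref{MainNoFieldThm} directly, reading off Zariski-density from the free-subgroup criterion; both are equally valid.
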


By Theorem \ref{KleinMainThm}, 
it suffices to note that every finite-rank 
nonabelian free group is torsion-free 
nonelementary Kleinian. 
Alternatively, we can deduce 
Theorem \ref{FreeGroupsThm} 
using Remark \ref{PSLdZFIRmrk} and 
the following well-known result of Sanov 
\cite{Sanov}. 

\begin{thm} \label{SanovThm}
Let: 
\begin{center}
$a = \left( \begin{array}{cc}
1 & 2 \\
0 & 1 \\
\end{array} \right), 
b = \left( \begin{array}{cc}
1 & 0 \\
2 & 1 \\
\end{array} \right) \in \SL_2 (\mathbb{Z})$. 
\end{center}
Then $\langle a,b\rangle$ 
is a free group of rank $2$, 
of finite index in $\SL_2 (\mathbb{Z})$. 
\end{thm}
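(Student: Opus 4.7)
The plan is to prove the two claims separately: freeness by a ping-pong argument on $\mathbb{R}^2$, and finite index via an analysis of the principal congruence subgroup $\Gamma(2)$ of level $2$.

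For freeness, I would apply the ping-pong lemma with the two disjoint subsets of $\mathbb{R}^2 \setminus \{0\}$:
\[ X = \{(x,y) : |x| > |y|\}, \qquad Y = \{(x,y) : |y| > |x|\}. \]
Using $a^n = \begin{pmatrix} 1 & 2n \\ 0 & 1 \end{pmatrix}$, a direct calculation shows that for any $0 \neq n \in \mathbb{Z}$ and any $(x,y) \in Y$ one has $|x + 2ny| \geq 2|n||y| - |x| > |y|$, so $a^n Y \subseteq X$. The symmetric computation with $b^n$ yields $b^n X \subseteq Y$. The ping-pong lemma then gives that $\langle a, b \rangle$ is free of rank $2$.

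For the finite-index claim, both $a$ and $b$ reduce to $I$ modulo $2$, so $\langle a, b\rangle \leq \Gamma(2)$; since $\SL_2(\mathbb{F}_2)$ has order $6$, the index $[\SL_2(\mathbb{Z}) : \Gamma(2)] = 6$, and it remains to bound $[\Gamma(2) : \langle a, b\rangle]$. For this I would prove $\Gamma(2) = \langle a, b, -I\rangle$ by an explicit matrix reduction: given any $M = \begin{pmatrix} p & q \\ r & s \end{pmatrix} \in \Gamma(2)$ (so $p,s$ are odd and $q,r$ are even), left multiplication by $a^{\pm k}$ modifies the first row by adding an even multiple of the second row, and analogously $b^{\pm k}$ modifies the second row by an even multiple of the first. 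Running a Euclidean algorithm on the first column, and exploiting that one entry is odd and the other even, one reduces $M$ to an upper-triangular matrix with $\pm 1$ on the diagonal; a single further multiplication by a power of $a$ then yields $\pm I$. Since $\langle a, b\rangle$ is torsion-free by the first part, $-I \notin \langle a, b\rangle$, so $[\Gamma(2) : \langle a, b\rangle] = 2$ and hence $[\SL_2(\mathbb{Z}) : \langle a, b\rangle] = 12$.

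The ping-pong calculation is routine; the main obstacle is the Euclidean reduction, which requires a careful choice of complexity measure (for instance $\min(|M_{11}|,|M_{21}|)$, taking advantage of the opposite parities of these entries in $\Gamma(2)$) so that each matrix multiplication strictly decreases it and the induction terminates at $\pm I$.
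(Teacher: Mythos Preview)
Your argument is correct: the ping-pong on the cones $X=\{|x|>|y|\}$ and $Y=\{|y|>|x|\}$ is the classical Sanov proof of freeness, and the Euclidean reduction on the first column of a matrix in $\Gamma(2)$, exploiting the odd/even parity of the two entries to force strict decrease at each step, is the standard way to show $\Gamma(2)=\langle a,b,-I\rangle$, whence $[\SL_2(\mathbb{Z}):\langle a,b\rangle]=12$.

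There is nothing to compare against in the paper itself: Theorem~\ref{SanovThm} is quoted as a ``well-known result of Sanov'' with a reference to \cite{Sanov} and no proof is supplied. Your write-up is exactly the kind of self-contained argument one would expect to stand in for that citation.
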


\begin{proof}[Proof of Theorem \ref{FreeGroupsThm}]
By Theorem \ref{SanovThm}, and the fact that 
the rank-$r$ free group $F_r$ 
embeds as a finite-index subgroup 
of $F_2$ for all $r \geq 2$, 
we can assume that $F_r$ is faithfully represented 
as a finite-index 
subgroup of $\SL_2 (\mathbb{Z})$. 
The restriction of the natural quotient 
$\SL_2 (\mathbb{Z}) \twoheadrightarrow 
\PSL_2 (\mathbb{Z})$ to $F_r$ is injective, 
since $F_r$ is torsion-free and $Z(\SL_2 (\mathbb{Z}))$ 
is finite, hence $F_r$ is also faithfully 
represented as a finite-index 
subgroup of 
$\PSL_2 (\mathbb{Z})$. 
The result follows as in Remark \ref{PSLdZFIRmrk}. 
\end{proof}

\section{Topological full groups}

In this section we prove Theorem \ref{NonSharpMIFThm}, 
which asserts that not every finitely generated 
MIF group is sharply MIF. 
Let $X$ denote the Cantor space. 

\begin{defn} \label{TFGdefn}
Let $G$ be a group and let 
$\alpha : G \rightarrow \Homeo (X)$ be a continuous 
action on $X$. 
The \emph{topological full group} $T(\alpha)$ 
of the action is the group consisting of all 
homeomorphisms $\phi$ of $X$ for which 
there exists a finite partition $X = C_1 \sqcup \cdots \sqcup C_p$ of $X$ consisting of nonempty clopen sets 
$C_i$, and elements $g_1 , \ldots g_p \in G$ 
such that $\phi|_{C_i} = \alpha(g_i)|_{C_i}$ 
for $1 \leq i \leq p$. 
\end{defn}

Recall that an action of a group $G$ on a set $\Omega$ 
is \emph{highly transitive} if, 
for every positive integer $n$, 
and every two $n$-tuples $x_1 , \ldots , x_n$ 
and $y_1 , \ldots , y_n$ of distinct points of $\Omega$, 
there exists $g \in G$ such that $g(x_i) = y_i$ 
for $1 \leq i \leq n$. 
It is easy to see that if the action of $G$ 
on $\Omega$ is highly transitive, 
then so is the restriction of that action to 
any finite-index subgroup of $G$. 

\begin{propn} \label{HTProp}
Let $G$ and $\alpha$ be as in Definition \ref{TFGdefn}. 
Let $x \in X$ and suppose the orbit $\Orb(x)$ of $x$ in $X$ 
is infinite. Then the action of $T(\alpha)$ 
on $\Orb(x)$ is highly transitive. 
\end{propn}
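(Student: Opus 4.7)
The plan is to realise the desired bijection $x_i\mapsto y_i$ by a composition in $T(\alpha)$ of ``local involutive swaps''. The basic building block is the following elementary observation: if $U,V\subseteq X$ are disjoint nonempty clopen sets and $g\in G$ satisfies $\alpha(g)(U)=V$, then the self-map of $X$ equal to $\alpha(g)$ on $U$, to $\alpha(g^{-1})$ on $V$, and to $\id$ on $X\setminus(U\cup V)$ is a homeomorphism (three clopen pieces glue continuously) and a member of $T(\alpha)$. Using such swaps we can move $x_i$ to $y_i$ in two hops, via a fresh ``transit'' point.

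Since $\Orb(x)$ is infinite and $\{x_i\}\cup\{y_i\}$ is finite, first choose distinct auxiliary points $a_1,\ldots,a_n\in\Orb(x)$ avoiding $\{x_i\}\cup\{y_i\}$. Using Hausdorffness and zero-dimensionality of the Cantor space, pick $3n$ pairwise disjoint clopen neighborhoods $W^x_i\ni x_i$, $W^a_i\ni a_i$, $W^y_i\ni y_i$. For each $i$ fix $g_i,h_i\in G$ with $\alpha(g_i)(x_i)=a_i$ and $\alpha(h_i)(a_i)=y_i$, available because the three points lie in the same $G$-orbit. By continuity of $\alpha(g_i)$ and $\alpha(h_ig_i)$ at $x_i$, shrink a clopen neighborhood $U_i\subseteq W^x_i$ of $x_i$ so that $A_i:=\alpha(g_i)(U_i)\subseteq W^a_i$ and $V_i:=\alpha(h_i)(A_i)\subseteq W^y_i$. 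Disjointness of the $W$'s then makes the $3n$ clopen sets $\{U_i,A_i,V_i\}_{i=1}^n$ pairwise disjoint.

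Define $\tau_i\in T(\alpha)$ as the swap of $U_i$ and $A_i$ via $g_i$, and $\rho_i\in T(\alpha)$ as the swap of $A_i$ and $V_i$ via $h_i$, each being the identity off its two relevant clopen pieces. Set $\phi:=\rho_n\tau_n\cdots\rho_1\tau_1\in T(\alpha)$. For fixed $i$, pairwise disjointness forces $\tau_j$ and $\rho_j$ to fix $x_i$ for $j<i$; then $\tau_i(x_i)=a_i$ and $\rho_i(a_i)=y_i$; and each later $\tau_j$, $\rho_j$ with $j>i$ fixes $y_i$. Hence $\phi(x_i)=y_i$ for every $i$, as required.

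The argument is essentially bookkeeping. The only non-routine step is the continuity-based shrinking needed to force the three families $U_i$, $A_i$, $V_i$ to sit inside the prescribed separating neighborhoods simultaneously, and this is immediate since only finitely many continuous maps $\alpha(g_i)$ and $\alpha(h_ig_i)$ are involved.
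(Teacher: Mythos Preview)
Your building block---the involutive clopen swap---is exactly what the paper uses, and the overall strategy is sound. There is, however, a small but genuine gap in the execution: the step ``pick $3n$ pairwise disjoint clopen neighborhoods $W^x_i\ni x_i$, $W^a_i\ni a_i$, $W^y_i\ni y_i$'' presupposes that all $3n$ points are pairwise distinct, whereas the definition of high transitivity allows the source and target tuples to overlap (for instance $x_1=y_2$). In that case no such disjoint system of neighbourhoods exists, and the subsequent bookkeeping (``$\tau_j,\rho_j$ with $j<i$ fix $x_i$'') breaks down.

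The repair is immediate and already latent in your use of the auxiliary $a_i$: carry out the construction in two independent passes. First build $\psi_1\in T(\alpha)$ sending each $x_i$ to $a_i$; here only the $2n$ genuinely distinct points $x_i,a_i$ are in play, so your argument goes through with $n$ direct swaps and no further intermediates. Then build $\psi_2$ sending each $a_i$ to $y_i$ likewise, and set $\phi=\psi_2\psi_1$. The paper organises the same idea more cleanly: it fixes the finite set $\Sigma=\{x_i\}\cup\{y_i\}$ and shows that each transposition $(y\;z)$ on $\Sigma$ is realised by a single swap in $T(\alpha)$ supported near just those two points, so the full $\Sym(\Sigma)$ is available. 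This yields high transitivity at once and never requires separating more than two points at a time, so the overlap issue simply does not arise.
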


\begin{proof}
It suffices to prove that for any nonempty finite subset 
$\Sigma \subseteq \Orb(x)$, there is a subgroup 
$H(\Sigma)$ of $T(\alpha)$ preserving $\Sigma$ 
and acting on $\Sigma$ as $\Sym(\Sigma)$. 
Let $y,z \in \Sigma$ be distinct points, 
and let $g \in G$ with $\alpha(g)(y)=z$. 
There exists a clopen neighbourhood $C$ of $y$ such that 
$C \cap \alpha(g)(C) = \emptyset$; 
$C \cap \Sigma = \lbrace y \rbrace$ 
and $\alpha(g)(C) \cap \Sigma = \lbrace z \rbrace$. 
Let $\tau_{y,z} \in T(\alpha)$ fix 
$X \setminus (C \cup \alpha(g)(C))$,  
act on $C$ as $\alpha(g)$, and act on $\alpha(g)(C)$ 
as $\alpha(g)^{-1}$. Then $\tau_{y,z}$ preserves $\Sigma$ 
and acts upon it as the transposition $(y \; z)$. 
The desired result follows. 
\end{proof}

An action of a group on $X$ is \emph{minimal} 
if every orbit of the action is dense in $X$. 

\begin{thm}[Theorem 8.1 of \cite{Nekr}] \label{NekraThm}
There exists a finitely generated 
infinite group $F$ of subexponential word growth 
with a faithful continuous minimal action 
$\alpha : F \rightarrow \Homeo (X)$, 
such that $T(\alpha) \cong F$. 
Moreover the derived subgroup $[F,F]$ of $F$ is 
simple, and has finite index in $F$. 
\end{thm}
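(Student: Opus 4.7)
The plan is to follow Nekrashevych's construction from \cite{Nekr}. I would start with a minimal subshift $Y \subseteq A^{\mathbb{Z}}$ over a finite alphabet $A$, chosen from a self-similar family with controlled combinatorics (for example, the orbit closure of a carefully designed bi-infinite sequence of prescribed subword complexity). The infinite dihedral group $D_\infty = \langle a, b \mid a^2 = b^2 = 1 \rangle$ acts on $Y$ by homeomorphisms, where one generator acts as an involution on the coordinate at the origin and the other as the composition of the shift with a similar involution, giving a continuous minimal action of $D_\infty$ on the Cantor set $Y$.

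Next I would \emph{fragment} the two generators. Each generator is an involution, and each is replaced by finitely many involutions, one for every cylinder of some fixed depth around the origin, each acting as the original on its cylinder and as the identity elsewhere. Let $F$ be the group generated by these fragmentation involutions; then $F$ acts faithfully, continuously and minimally on $Y$, and by construction $F \leq T(\alpha)$. The reverse inclusion is a refinement argument: any element of $T(\alpha)$ is locally given by elements of $D_\infty$ on a finite clopen partition of $Y$, and by passing to a sufficiently fine cylinder refinement one can rewrite each local piece as a product of fragmentation generators, yielding $T(\alpha) \cong F$.

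For subexponential growth of $F$, I would exploit a contracting self-similar structure on $F$ in the sense of Nekrashevych: a word of length $n$ in the fragmentation generators, when restricted to a cylinder at level $k$, acts as a strictly shorter word in an iterated copy of $F$, and a careful contraction-ratio estimate then pushes the ball-growth function below exponential. For simplicity and finite index of $[F,F]$, I would invoke Matui's theorem that the commutator subgroup of the topological full group of a minimal action on the Cantor set is simple; combined with $T(\alpha) \cong F$ this gives simplicity of $[F,F]$, and since $F$ is generated by involutions, its abelianization is a quotient of a finite elementary abelian $2$-group, so $[F,F]$ has finite index in $F$.

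The main obstacle is the subexponential growth bound. The identification $T(\alpha) \cong F$ is essentially formal, minimality is built into the subshift, and both simplicity and finite index of $[F,F]$ follow from general principles once the setup is in place; but controlling growth requires a delicate combinatorial analysis of the subshift, a precisely chosen fragmentation depth, and the contracting self-similar recursion, and this is the technical heart of \cite{Nekr}.
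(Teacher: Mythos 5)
The paper states this result by citation only (Theorem 8.1 of Nekrashevych's paper \cite{Nekr}) and gives no proof or sketch of its own, so there is nothing internal to compare against beyond the reference. Your sketch is a reasonable high-level account of Nekrashevych's construction: a minimal palindromic subshift carrying a $D_\infty$-action by two involutions whose product is the shift, fragmentation of those involutions over a clopen cover, a contracting self-similar estimate for subexponential growth, simplicity of the commutator subgroup via a full-group simplicity theorem, and finite index of $[F,F]$ because a group generated by involutions has an elementary abelian $2$-group as abelianization. You also correctly place the technical weight on the growth bound. Two points are understated, however. First, the identity $T(\alpha)\cong F$ is not ``essentially formal'': $F$ is by construction a proper subgroup of the topological full group of the ambient $D_\infty$-action, and showing that $F$ is nonetheless full with respect to its \emph{own} action is a genuine part of Nekrashevych's theorem, tied to how the fragmentation is chosen. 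Second, Matui's simplicity theorem as usually stated concerns Cantor minimal $\mathbb{Z}$-systems, and the action here is not a $\mathbb{Z}$-action; one needs the appropriate generalization (Nekrashevych's theory of alternating full groups of \'etale groupoids, or Matui's later work in that setting). Neither issue changes the architecture of the argument, but both are more than formalities and would need care in a complete write-up.
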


\begin{proof}[Proof of Theorem \ref{NonSharpMIFThm}]
Let $F$ be as in Theorem \ref{NekraThm}. 
Our example shall be $\Gamma = [F,F]$. 
Being of finite index in $F$, 
$\Gamma$ is also a finitely generated group of 
subexponential growth. 
As noted in \cite{Brad} Remark 9.3, 
no group of subexponential growth can be sharply MIF. 
It therefore suffices to verify that $\Gamma$ is MIF. 

Let $x \in X$ and set $\Omega = \Orb(x)$, 
a countably infinite set. 
Then by Proposition \ref{HTProp} and $F \cong T(\alpha)$, 
$F$ admits a highly transitive action on $\Omega$, 
hence so does $\Gamma$. Moreover, 
since $\Omega$ is dense in $X$, 
under this action no nontrivial 
element of $\Gamma$ has finite support on $\Omega$. 
Finally, we apply Theorem 5.9 of \cite{HullOsin}: 
if $\Gamma$ is not MIF, 
then $\Gamma$ contains a normal subgroup isomorphic to 
the group $A$ of all finitely supported even permutations 
of $\mathbb{N}$. Since $\Gamma$ is simple, 
we have $\Gamma \cong A$. 
This is a contradiction, as $\Gamma$ is finitely generated 
and $A$ is not. 
\end{proof}

\begin{rmrk}
\normalfont
As noted in the Introduction to \cite{Nekr}, 
the construction given therein yields an upper 
bound on the word-growth of $\Gamma$ of the form 
$f(n) = C_1 \exp\big(n/\exp(C_2 \sqrt{\log n})\big)$. 
By the argument of \cite{Brad} Remark 9.3, 
we obtain a lower bound for $\mathcal{M}_{\Gamma}$ 
which is approximately an inverse function to $n^2 f(n)$. 
The lower bound thus obtained is stronger than
 $\log (n) \log\log (n)^C$, 
but weaker than $\log(n)^{1+\frac{1}{C}}$, 
for every $C>0$. 
Presumably this lower bound is far from sharp; 
MIF growth for topological full groups 
should be investigated elsewhere. 
\end{rmrk}

\subsection*{Acknowledgements}

We are grateful to Nicol\'{a}s Matte Bon for 
drawing our attention to 
the relevance of Nekrashevych's construction 
to proving Theorem \ref{NonSharpMIFThm}.

\end{document}